\documentclass[11pt]{amsart}
\usepackage[pagewise]{lineno}
\usepackage{amsmath,amssymb,latexsym,soul,cite,mathrsfs}

\usepackage{color,enumitem,graphicx}
\usepackage[colorlinks=true,urlcolor=blue,
citecolor=red,linkcolor=red,linktocpage,pdfpagelabels,
bookmarksnumbered,bookmarksopen]{hyperref}
\usepackage[english]{babel}

\usepackage[left=2.9cm,right=2.9cm,top=2.8cm,bottom=2.8cm]{geometry}
\usepackage[hyperpageref]{backref}

\usepackage[colorinlistoftodos]{todonotes}
\makeatletter
\providecommand\@dotsep{5}
\def\listtodoname{List of Todos}
\def\listoftodos{\@starttoc{tdo}\listtodoname}
\makeatother

\numberwithin{equation}{section}
%\pagestyle{myheadings}
% \markboth{}{} \pretolerance=10000
%\def\lb{\lambda}
%\def\var{\varepsilon}
%\def\pil{\left<}
%\def\pir{\right>}

%\def\nd{\noindent}
%\def\thend{\rule{3mm}{3mm}}
%\def\mathbb Re{\mathbb{R}}

%\newtheorem{lem}{Lemma}
%\newtheorem{prop}{Proposition}
%\newtheorem{theo}{Theorem}
%\newtheorem{coro}{Corollary}
%\newtheorem{rem}{Remark}

\newcommand{\Om} {\Omega}
\newcommand{\la} {\lambda}
\newcommand{\La} {\Lambda}
\newcommand{\mb} {\mathbb}
\newcommand{\mc} {\mathcal}
\newcommand{\noi} {\noindent}
\newcommand{\uline} {\underline}
\newcommand{\oline} {\overline}

\newtheorem{Theorem}{Theorem}[section]
\newtheorem{Lemma}[Theorem]{Lemma}

\newtheorem{Remark}[Theorem]{Remark}
\newtheorem{Definition}[Theorem]{Definition}

\begin{document}

\title[Weighted singular anisotropic equations]
{On a class of weighted anisotropic $p$-Laplace equation with singular nonlinearity}
%{Anisotropic problem with variable singular exponent and measure data}
\author{Prashanta Garain}

%\address[Kaushik Bal ]
%{\newline\indent Department of Mathematics and Statistics
%\newline\indent
%Indian Institute of Technology Kanpur
%\newline\indent
%Kanpur-208016, Uttar Pradesh, India
%\newline\indent
%Email: {\tt kaushik@iitk.ac.in} }

\address[Prashanta Garain ]
{\newline\indent Department of Mathematics
	\newline\indent
Uppsala University
	\newline\indent
S-751 06 Uppsala, Sweden
\newline\indent
Email: {\tt pgarain92@gmail.com} }

\pretolerance10000

\subjclass[2020]{35A01, 35D30, 35J92, 35J75}
\keywords{Weighted anisotropic problem, singular nonlinearity, existence, $p$-admissible weights, variational method, approximation technique.}

\begin{abstract}
We consider a class of singular weighted anisotropic $p$-Laplace equations. We provide sufficient condition on the weight function that may vanish or blow up near the origin to ensure the existence of at least one weak solution in the purely singular case and at least two different weak solutions in the purturbed singular case.
\end{abstract}

\maketitle

\section{Introduction}
In this article, we establish existence of weak solutions for the following class of weighted anisotropic singular problems
\begin{equation}\label{maineqn}
-F_{p,w}u=g(x,u)\text{ in }\Om,\quad u>0\text{ in }\Om,\quad u=0\text{ on }\partial\Om,
\end{equation}
where $1<p<\infty$, $\Om\subset\mathbb{R}^N$ is a bounded smooth domain with $N\geq 2$. Here 
$$
F_{p,w}u:=\text{div}(w(x)F(\nabla u)^{p-1}\nabla_{\xi}F(\nabla u))
$$
is the weighted anisotropic $p$-Laplace operator, where $F:\mathbb{R}^N\to[0,\infty)$ is the Finsler-Minkowski norm, that is
\begin{enumerate}
\item[(H0)] $F(x)\geq 0$, for every $x\in\mathbb{R}^N$.
\item[(H1)] $F(x)=0$, if and only if $x=0$.
\item[(H2)] $F(tx)=|t|H(x)$, for every $x\in\mathbb{R}^N$ and $t\in\mathbb{R}$.
\item[(H3)] $F\in C^{\infty}\left(\mathbb{R}^N\setminus\{0\}\right)$.
\item[(H4)] the Hessian matrix $\nabla_{\xi}^2\big(\frac{F^2}{2}\big)(x)$ is positive definite for all $x\in\mathbb{R}^N\setminus\{0\}$.
\end{enumerate} 
Here $\nabla_{\xi}$ denotes the gradient operator with respect to the $\xi$ variable. The weight function $w$ belong to a class of $p$-admissible weights $W_p^{s}$ defined in Section $2$. By singularity, we refer to the blow up property of the nonlinearity $g$ in the right hand side of \eqref{maineqn}. We discuss existence of at least one weak solution of the problem \eqref{maineqn} for the purely singular nonlinearity $g$ of the form $(g_1)$ given by
$$
g(x,u)=\la h(u)u^{-\gamma},\quad \leqno(g_1)
$$
where $\la>0,\gamma\in(0,1)$ and 
\begin{enumerate}
\item[$(h_1)$] $h:[0,\infty)\to\mathbb{R}$ is a continuous nondecreasing function such that $h(0)>0$ and
\item[$(h_2)$]
\begin{equation}\label{H1f1}
\lim_{t\to 0}\frac{h(t)}{t^\gamma}=\infty, \quad \lim_{t\to\infty}\frac{h(t)}{t^{\gamma+p-1}}=0.
\end{equation}
\end{enumerate}
Further, we study multiplicity result of the equation \eqref{maineqn} for the purturbed singular nonlinearity $g$ of the form $(g_2)$ given by
$$
g(x,u)=\la u^{-\gamma}+u^q,\quad\leqno(g_2)
$$
for a certain range of $\la,q>0$, when $\gamma\in(0,1)$.

To give some more insight on the equation \eqref{maineqn}, let us discuss few examples of $F$.

\noi \textbf{Examples:} Let $x=(x_1,x_2,\ldots,x_N)\in\mathbb{R}^N$.
\begin{enumerate}
\item[(i)] For $t>1$, we define 
\begin{equation}\label{ex1}
F_t(x):=\Big(\sum_{i=1}^{N}|x_i|^t\Big)^\frac{1}{t}.
\end{equation}
\item[(ii)] For $\lambda,\mu>0$, we define
\begin{equation}\label{ex2}
F_{\lambda,\mu}(x):=\sqrt{\lambda\sqrt{\sum_{i=1}^{N}x_i^{4}}+\mu\sum_{i=1}^{N}x_i^{2}}.
\end{equation}
\end{enumerate}
Then, it follows that the functions $F_t, F_{\lambda,\mu}:\mathbb{R}^N\to[0,\infty)$ given by \eqref{ex1} and \eqref{ex2} satisfies all the hypothesis from (H0)-(H4), see Mezei-Vas \cite{MV}.

\begin{Remark}\label{exrmk1}
For $i=1,2$, if $\lambda_i,\mu_i$ are positive real numbers such that $\frac{\lambda_1}{\mu_1}\neq\frac{\lambda_2}{\mu_2}$, then $F_{\lambda_1,\mu_1}$ and $F_{\lambda_2,\mu_2}$ given by \eqref{ex2} defines two non-isometric norms in $\mathbb{R}^N$.
\end{Remark}

\begin{Remark}\label{exrmk2}
If $F=F_t$ is given by \eqref{ex1}, then we have
\begin{equation}\label{ex}
F_{p,w}u=
\begin{cases}
\Delta_{p,w} u:=\text{div}(w(x)|\nabla u|^{p-2}\nabla u),\, (weighted\,$p$\text{-Laplacian})\,\text{if }t=2,\,1<p<\infty,\\
S_{p,w}u=\sum_{i=1}^{N}\frac{\partial}{\partial x_i}\Big(|u_i|^{p-2}u_i\Big),\,(weighted\,pseudo\, $p$\text{-Laplacian})\,\text{if }t=p\in(1,\infty),
\end{cases}
\end{equation}
where $u_i:=\frac{\partial u}{\partial x_i}$, for $i=1,2,\ldots,N$.
\end{Remark}

Therefore, it is clear from the above examples that the equation \eqref{maineqn} covers a wide range of weighted singular equations. In particular, equation \eqref{maineqn} extends the weighted $p$-Laplace equation
\begin{equation}\label{maineqn1}
-\Delta_{p,w}u=g(x,u)\text{ in }\Om,\quad u>0\text{ in }\Om,\quad u=0\text{ on }\partial\Om.
\end{equation}
For bounded weight function $w$, the equation \eqref{maineqn1} is investigated thoroughly over the last three decade for various type of singular nonlinearity $g$. We refer to Crandall-Rabinowitz-Tartar \cite{CRT}, Ghergu-R\u{a}dulescu \cite{book-radu}, Boccardo-Orsina \cite{Boc} and the references therein for the singular Laplace equation. Further, the nonlinear case is studied as well. In this concern, we refer to Canino-Sciunzi-Trombetta \cite{Canino}, De Cave \cite{DeCave}, Oliva-Orsina-Petitta \cite{PetOr, Petitta} and the references therein.

When $g$ is of the form $(g_1)$ and $w=1$, equation \eqref{maineqn1} is studied by Ko-Lee-Shivaji \cite{KLS} to obtain existence results. For the purturbed case $(g_2)$ and $w=1$, equation \eqref{maineqn1} is studied by Arcoya-Boccardo-Me\'rida \cite{arcoyaBoc, Merida} to obtain multiplicity results for a certain range of the parameters $\la,q>0$. These results has been further extended to investigate the $p$-Laplace equation
\begin{equation}\label{plap}
-\text{div}(|\nabla u|^{p-2}\nabla u)=\la u^{-\gamma}+u^q\text{ in }\Om,\quad u>0\text{ in }\Om,\quad u=0\text{ on }\partial\Om.
\end{equation}
See Giacomoni-Schindler-Tak\'{a}\v{c} \cite{GST}, Bal-Garain \cite{BG} and the references therein.

Anisotropic $p$-Laplace equation has been a topic of considerable attention in the recent years. In the nonsingular unweighted case, we refer to Alvino-Ferone-Trombetti-Lions \cite{AFTL}, Belloni-Ferone-Kawohl \cite{BFKzamp}, Belloni-Kawohl-Juutinen \cite{BKJ}, Bianchini-Giulio \cite{BC}, Xia \cite{Xiathesis},  Cianchi-Salani\cite{CS}, Ferone-Kawohl \cite{FK}, Kawohl-Novaga \cite{KN} and with weights, see Dipierro-Poggesi-Valdinoci \cite{Val}. When $g$ is singular, for $w=1$, anisotropic $p$-Laplace equations is investigated by Biset-Mebrate-Mohammed \cite{BMM20}, Farkas-Winkert \cite{PF20} and Farkas-Fiscella-Winkert \cite{PF21}, Bal-Garain-Mukherjee \cite{BGM}.

Although it is worth mentioning that singular weighted anisotropic equations are very less understood, when the weight function $w$ vanish or blow up near the origin (for example, $w(x)=|x|^{\alpha},\,\alpha\in\mathbb{R}$). Such weighted equations are referred to as degenerate equation, where the degeneracy is captured by the weight function $w$. In the nonsingular setting, such a situation is discussed for weighted $p$-Laplace equations in detail in the literature, refer to Dr\'abek-Kufner-Nicolosi \cite{Drabek}, Heinonen-Kilpel\"ainen-Martio \cite{Juh} and the references therein. In the singular case, recently, for a class of Muckenhoupt weights, existence results for the weighted $p$-Laplace equation \eqref{maineqn1} with various type of singularity $g$ have been investigated in Garain-Mukherjee \cite{Gmn, GMmed}. Further, these class of weights has been generalized to a class of $p$-admissible weights for the weighted $p$-Laplace equation \eqref{maineqn1}. For such weights, nonexistence results has been discussed in Garain-Kinnunen \cite{GKpams} and existence results also established in Hara \cite{Hara} for the purely singular nonlinearity $g$. The weighted anisotropic case is recently discussed in Bal-Garain \cite{Gpadm, BGpadm} for a class of $p$-admissible weights.

In this article, our main purpose is to provide sufficient condition on the weight function $w$ to ensure the existence results for the weighted anisotropic $p$-Laplace equation \eqref{maineqn}. More precisely, when $g$ takes the purely singular form $(g_1)$, we prove existence of at least one weak solution of the problem \eqref{maineqn} (see Theorem \ref{pu2thm}) and existence of at least two different weak solutions of the problem \eqref{maineqn}, when $g$ takes the purturbed singular form $(g_2)$ (Theorem \ref{mthm}).

Some major difficulties in the weighted case are that suitable embedding results and regularity results are not readily available. We found a class of $p$-admissible weights $W_p^{s}$ defined in Section $2$ to be useful, which allows us to shift from the weighted Sobolev spaces into the unweighted Sobolev spaces.

To deal with the nonlinearity $(g_1)$, we follow the approach from Ko-Lee-Shivaji \cite{KLS}. To this end, we construct suitable subsolution and supersolution of \eqref{maineqn} by implementing the idea from Haitao \cite{YHaitao} to the weighted case.

To deal with $(g_2)$, we employ the variational approach from Arcoya-Boccardo \cite{arcoyaBoc}. To this end, we also need some existence and regularity properties of the eigenfunction of the weighted anisotropic eigenvalue problem
\begin{equation}\label{wevp}
-F_{p,w}u=\la|u|^{p-2}u \text{ in } \Om,\,\,u=0\text{ on }\partial\Om.
\end{equation}
Such type of equation is studied for the unweighted setting in Belloni-Ferone-Kawohl \cite{BFKzamp}. As far as we are aware, the weighted case is unknown. Although the proof is analogous to Dr\'abek-Kufner-Nicolosi \cite{Drabek}, for the readers convenience, we give a proof in the appendix Section $5$.

This article is organized as follows: In Section $2$, we discuss some preliminary results and state our main results. In Sections $3-4$, we prove our main results. Finally, in the appendix Section $5$, we study the weighted anisotropic eigenvalue problem \eqref{wevp}.

\section{Preliminaries}
Throughout the rest of the article, we assume $1<p<\infty$, unless otherwise mentioned.

In order to define the notion of weak solutions for the problem \eqref{maineqn}, first we present some known facts about weighted Sobolev spaces, refer to \cite{Juh} for more details.

Let $1<p<\infty$. We say that $w$ is a $p$-admissible weight $W_p$, if $w\in L^1_{\mathrm{loc}}(\mathbb{R}^N)$ such that $0<w<\infty$ almost everywhere in $\mathbb{R}^N$ and satisfies the following assumptions:
\begin{enumerate}
\item[(i)] for any ball $B$ in $\mathbb{R}^N$, there exists a positive constant $C_{\mu}$ such that $\mu(2B)\leq C_{\mu} \mu(B)$, where $\mu(E)=\int_{E}w\,dx$ for a measurable subset $E$ in $\mathbb{R}^N$ and $d\mu(x)=w(x)\,dx$.
\item[(ii)] If $D$ is an open set and $\phi\in C^{\infty}(D)$ is a sequence of functions such that $\int_{D}|\phi_i|^p\,d\mu\to 0$ and $\int_{D}|\nabla\phi_i-v|^p\,d\mu\to 0$ as $i\to\infty$, where $v$ is a vector valued measurable function in $L^p(D,w)$, then $v=0$.
\item[(iii)] There exists constants $\kappa>1$ and $C_1>0$ such that 
\begin{equation}\label{wp}
\left(\frac{1}{\mu(B)}\int_{B}|\phi|^{\kappa p}\,d\mu\right)^\frac{1}{\kappa p}\leq C_1 r \left(\frac{1}{\mu(B)}\int_{B}|\nabla\phi|^p\,d\mu\right)^\frac{1}{p},
\end{equation}
whenever $B=B(x_0,r)$ is a ball in $\mathbb{R}^N$ and $\phi\in C_{c}^\infty(B)$.
\item[(iv)] There exists a positive constant $C_2$ such that
\begin{equation}\label{wp1}
\int_{B}|\phi-\phi_B|^p\,d\mu\leq C_2 r^p\int_{B}|\nabla\phi|^p\,d\mu,
\end{equation}
whenever $B=B(x_0,r)$ is a ball in $\mathbb{R}^N$ and $\phi\in C^\infty(B)$ is bounded. Here
$$
\phi_B=\frac{1}{\mu(B)}\int_{B}\phi\,d\mu.
$$
\end{enumerate}

\textbf{Examples:} The class of Muckenhoupt weights $A_p$ are $p$-admissible, see \cite[Theorem 15.21]{Juh}. In particular, if $c\leq w\leq d$ for some positive constants $c,d$, then $w\in A_p$ for any $1<p<\infty$. Let $1<p<N$ and $J_f(x)$ denote the determinant of the Jacobian matrix of a $K$-quasiconformal mapping $f:\mathbb{R}^N\to\mathbb{R}^N$, then $w(x)=J_f(x)^{1-\frac{p}{N}}\in W_q$ for any $q\geq p$, see \cite[Corollary 15.34]{Juh}. If $1<p<\infty$ and $\nu>-N$, then $w(x)=|x|^{\nu}\in W_p$, see \cite[Corollary 15.35]{Juh}. For more examples, refer to \cite{ex1, ex2, ex3, Tero, Juh} and the references therein.

\begin{Definition}(Weighted Spaces)
Let $1<p<\infty$ and $w\in W_p$. Then the weighted Lebesgue space $L^{p}(\Omega,w)$ is the class of measurable functions $u:\Om\to\mathbb{R}$ such that the norm of $u$ given by
\begin{equation}\label{lnorm}
\|u\|_{L^p(\Om,w)} = \Big(\int_{\Omega}|u(x)|^{p} w(x)\,dx\Big)^\frac{1}{p}<\infty.
\end{equation}
The weighted Sobolev space $W^{1,p}(\Om,w)$ is the class of measurable functions $u:\Om\to\mathbb{R}$ such that
\begin{equation}\label{norm1}
\|u\|_{1,p,w} = \Big(\int_{\Omega}|u(x)|^{p} w(x)\,dx+\int_{\Omega}|\nabla u(x)|^{p} w(x)\,dx\Big)^\frac{1}{p}<\infty.
\end{equation}
If $u\in W^{1,p}(\Om',w)$ for every $\Om'\Subset\Om$, then we say that $u\in W^{1,p}_{\mathrm{loc}}(\Om,w)$. The weighted Sobolev space with zero boundary value is defined as
$$
W^{1,p}_{0}(\Omega,w)=\overline{\big(C_{c}^{\infty}(\Omega),\|\cdot\|_{1,p,w}\big)}.
$$
\end{Definition}
By the Poincar\'e inequality from \cite{Juh} and Remark \ref{hypormk2} below, the norm defined by \eqref{norm1} on the space $W_{0}^{1,p}(\Omega,w)$ is equivalent to the norm given by
\begin{equation}\label{equinorm}
\|u\|_{W_0^{1,p}(\Om,w)}=\Big(\int_{\Omega}F(\nabla u)^p w\,dx\Big)^\frac{1}{p}.
\end{equation}
Moreover, the space $W^{1,p}_{0}(\Omega,w)$ is a separable and uniformly convex Banach space, see \cite{Juh}.

Next, we state an embedding result for a sublcass of $W_p$, which is important in our context. To this end, let
\begin{equation}\label{I}
I:=\Big[\frac{1}{p-1},\infty\Big)\cap\Big(\frac{N}{p},\infty\Big)
\end{equation}
and consider the following subclass of $W_p$ given by
\begin{equation}\label{wgtcls}
W_p^{s} := \Big\{w\in W_p: w^{-s}\in L^{1}(\Omega)\,\,\text{for some}\,\,s\in I\Big\}.
\end{equation}
Then we observe that
$$
w(x)=|x|^\alpha\in W_{p}^s,\text{ for }-N<\alpha<\min\Big\{\frac{N}{s},N(p-1)\Big\}.
$$
We refer the interested reader to \cite{Drabek} for more applications of such class of weight functions. Following the lines of the proof of \cite[Theorem $2.6$]{Gmn} based on \cite{Drabek} the following embedding result holds.
\begin{Lemma}\label{emb}
Let $w \in W_p^{s}$ for some $s\in I$. Then the following continuous inclusion maps hold
\[
    W^{1,p}(\Omega,w)\hookrightarrow W^{1,p_s}(\Omega)\hookrightarrow 
\begin{cases}
    L^t(\Omega),& \text{for } p_s\leq t\leq p_s^{*}, \text{ if } 1\leq p_s<N, \\
    L^t(\Omega),& \text{ for } 1\leq t< \infty, \text{ if } p_s=N, \\
    C(\overline{\Omega}),& \text{ if } p_s>N,
\end{cases}
\]
where $p_s = \frac{ps}{s+1} \in [1,p)$.
Moreover, the second embedding above is compact except for $t=p_s^{*}=\frac{Np_s}{N-p_s}$, if $1\leq p_s<N$ and the same result holds for the space $W_{0}^{1,p}(\Omega,w)$.
\end{Lemma}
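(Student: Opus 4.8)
The plan is to reduce the statement to the classical unweighted Sobolev and Rellich--Kondrachov theorems on the bounded smooth domain $\Omega$, the only genuinely ``weighted'' ingredient being a single application of H\"older's inequality that yields the first inclusion. So first I would fix $u\in W^{1,p}(\Omega,w)$ and write, for the gradient term,
$$
\int_\Omega|\nabla u|^{p_s}\,dx=\int_\Omega\big(|\nabla u|^{p}w\big)^{\frac{p_s}{p}}\,w^{-\frac{p_s}{p}}\,dx\le\Big(\int_\Omega|\nabla u|^{p}w\,dx\Big)^{\frac{p_s}{p}}\Big(\int_\Omega w^{-\frac{p_s}{p-p_s}}\,dx\Big)^{\frac{p-p_s}{p}},
$$
by H\"older with the conjugate exponents $\frac{p}{p_s}$ and $\frac{p}{p-p_s}$. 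The arithmetic point is that $p_s=\frac{ps}{s+1}$ gives $p-p_s=\frac{p}{s+1}$, hence $\frac{p_s}{p-p_s}=s$, so the last factor equals $\|w^{-s}\|_{L^1(\Omega)}^{(p-p_s)/p}<\infty$ precisely because $w\in W_p^{s}$. The identical computation with $u$ in place of $\nabla u$ bounds $\|u\|_{L^{p_s}(\Omega)}$. Since $w$ is $p$-admissible, the gradient appearing in the definition of $W^{1,p}(\Omega,w)$ is the distributional gradient (see \cite{Juh}), so these two estimates together give $u\in W^{1,p_s}(\Omega)$ with $\|u\|_{W^{1,p_s}(\Omega)}\le C\,\|u\|_{1,p,w}$; that is, the first inclusion holds and is continuous.

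Next I would record that $p_s\in[1,p)$: the bound $p_s<p$ is immediate, while $p_s\ge1$ is equivalent to $s(p-1)\ge1$, which holds because $s\in I\subseteq[\frac{1}{p-1},\infty)$; in particular $W^{1,p_s}(\Omega)$ is a genuine Sobolev space. With $1\le p_s<\infty$ and $\Omega$ bounded and smooth, the second inclusion is exactly the Sobolev embedding theorem in its three cases ($p_s<N$, $p_s=N$, $p_s>N$), and the stated compactness of $W^{1,p_s}(\Omega)\hookrightarrow L^t(\Omega)$ for $t$ strictly below the critical exponent (respectively for every finite $t$ when $p_s=N$, respectively into $C(\overline{\Omega})$ when $p_s>N$) is the Rellich--Kondrachov theorem; compactness at $t=p_s^{*}$ fails by the usual concentration examples. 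Composing the two inclusions, and using that on $W_0^{1,p}(\Omega,w)$ the norm $\|\cdot\|_{1,p,w}$ is equivalent to the norm \eqref{equinorm} via the Poincar\'e inequality, yields the asserted chain of embeddings.

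For the space $W_0^{1,p}(\Omega,w)=\overline{(C_c^\infty(\Omega),\|\cdot\|_{1,p,w})}$ one argues by density: the H\"older estimate above applied to $\phi\in C_c^\infty(\Omega)$ gives $\|\phi\|_{W^{1,p_s}(\Omega)}\le C\|\phi\|_{1,p,w}$, so the inclusion extends continuously to $W_0^{1,p}(\Omega,w)\hookrightarrow W_0^{1,p_s}(\Omega)$, after which the same Sobolev and Rellich--Kondrachov statements for $W_0^{1,p_s}(\Omega)$ apply verbatim. I do not anticipate a serious obstacle here: everything rests on the classical theory quoted above, and the one point deserving care is the bookkeeping of the H\"older exponents together with the identification of the weighted weak gradient with the distributional one, both of which are already contained in \cite{Juh} and in the argument of \cite[Theorem~2.6]{Gmn} based on \cite{Drabek}.
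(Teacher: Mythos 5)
Your proof is correct and follows essentially the same route as the paper, which does not reprove the lemma but defers to \cite[Theorem 2.6]{Gmn} and \cite{Drabek}, where precisely this H\"older argument with conjugate exponents $\frac{p}{p_s}$ and $\frac{p}{p-p_s}$ (so that $\frac{p_s}{p-p_s}=s$ and the hypothesis $w^{-s}\in L^1(\Omega)$ enters) is used to pass to $W^{1,p_s}(\Omega)$, followed by the classical Sobolev and Rellich--Kondrachov theorems. Your bookkeeping of the exponents, the verification that $s\ge\frac{1}{p-1}$ gives $p_s\ge 1$, and the density argument for $W_0^{1,p}(\Omega,w)$ are all accurate.
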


\begin{Remark}\label{Embrmk}
We note that if $0<c\leq w\leq d$ for some constants $c,d$, then $W^{1,p}(\Omega,w)=W^{1,p}(\Omega)$ and by the Sobolev embedding, Lemma \ref{emb} holds by replacing $p_s$ and $p_s^{*}$ with $p$ and $p^{*}$ respectively.
\end{Remark}

%The following result follows from Farkas-Winkert \cite[Proposition $2.1$]{PF20} and Xia \cite[Proposition $1.2$]{Xiathesis}.
%\begin{Lemma}\label{Happ}
%For every $x\in\mathbb{R}^N\setminus\{0\}$ and $t\in\mathbb{R}\setminus\{0\}$, we have
%\begin{enumerate}
%\item[(A)] $x\cdot\nabla_{\eta} \mathcal{F}(x)=\mathcal{F}(x)$.
%\item[(B)] $\nabla_{\eta}\mathcal{F}(tx)=\text{sign}(t)\nabla_{\eta}\mathcal{F}(x)$.
%\item[(C)] $|\nabla_{\eta}\mathcal{F}(x)|\leq C$, for some positive constant $C$.
%\item[(D)] $\mathcal{F}$ is strictly convex.
%\end{enumerate}
%\end{Lemma}

Now we are ready to define the notion of weak solutions for the problem \eqref{maineqn}. 

\begin{Definition}\label{wksoldef}
Let $w\in W_p^{s}$ and $g$ be either of the form $(g_1)$ or $(g_2)$. We say that $u\in W_0^{1,p}(\Omega,w)$ is a weak subsolution (or supersolution) of \eqref{maineqn}, if $u>0$ in $\Omega$ such that for every $\omega\Subset\Omega$, there exists a positive constant $c(\omega)$ with $u\geq c(\omega)>0$ in $\omega$ and
\begin{equation}\label{wksoleqn}
\int_{\Omega}w(x)F(\nabla u)^{p-1}\nabla_{\xi}F(\nabla u)\nabla\phi\,dx\leq (\text{ or })\geq g(x,u)\phi\,dx,
\end{equation}
for every nonnegative $\phi\in C_c^{1}(\Omega)$. We say that $u\in W_0^{1,p}(\Omega,w)$ is a weak solution of \eqref{maineqn}, if the equality in \eqref{wksoleqn} holds for every $\phi\in C_c^{1}(\Omega)$ without a sign restriction.
\end{Definition}

\begin{Remark}\label{tstrmk}
If $u$ is a weak solution of \eqref{maineqn}, then proceeding similarly as in the proof of \cite[Lemma $2.13$]{BGM}, it follows that the equality in \eqref{wksoleqn} holds for every $\phi\in W_0^{1,p}(\Om,w)$.
\end{Remark}

\textbf{Statement of the main results:} Our main results in this article reads as follows:

\begin{Theorem}\label{pu2thm}
Let $2\leq p<\infty$, $0<\gamma<1$ and $w\in W_p^{s}$ for some $s\in I$. Assume that $g$ is of the form $(g_1)$. Then for every $\lambda>0$, there exists a weak solution $u\in W_0^{1,p}(\Omega,w)\cap L^{\infty}(\Omega)$ of the problem \eqref{maineqn}.
\end{Theorem}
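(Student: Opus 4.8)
\emph{Strategy and construction of barriers.} The plan is the classical approximation-and-sandwich scheme for singular problems, carried out in the weighted anisotropic framework: approximate $(g_1)$ by a family of bounded Carath\'eodory nonlinearities, solve the resulting equations by direct minimization, trap each approximate solution between an $n$-independent sub/supersolution pair of \eqref{maineqn}, and pass to the limit. For the subsolution, let $(\la_1,\phi_1)$ be the first eigenpair of \eqref{wevp} from the appendix, so that $\phi_1\in W_0^{1,p}(\Om,w)\cap L^\infty(\Om)$, $\phi_1>0$ in $\Om$, $-F_{p,w}\phi_1=\la_1\phi_1^{p-1}$, and (by the Harnack inequality for $F_{p,w}$, a consequence of $p$-admissibility of $w$, cf. \cite{Juh}) $\phi_1\ge c(\omega)>0$ on each $\omega\Subset\Om$. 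Since $h$ is nondecreasing with $h(0)>0$, choosing $\epsilon>0$ so small that $\la_1(\epsilon\|\phi_1\|_\infty)^{p-1+\gamma}\le\la h(0)$ gives, pointwise in $\Om$,
\[
-F_{p,w}(\epsilon\phi_1)=\epsilon^{p-1}\la_1\phi_1^{p-1}\le\la h(0)(\epsilon\|\phi_1\|_\infty)^{-\gamma}\le\la h(\epsilon\phi_1)(\epsilon\phi_1)^{-\gamma},
\]
so $\underline u:=\epsilon\phi_1$ is a weak subsolution of \eqref{maineqn}; here the singularity $(h_1)$ is what makes the right-hand side beat $\epsilon^{p-1}$. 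For the supersolution, let $z\in W_0^{1,p}(\Om,w)\cap L^\infty(\Om)$ minimize $v\mapsto\frac1p\int_\Om wF(\nabla v)^p\,dx-\int_\Om v\,dx$, so $-F_{p,w}z=1$, $z>0$ in $\Om$, and $z$ is comparable to $d(\cdot,\partial\Om)$ near $\partial\Om$; fixing $\theta\in(1-\tfrac1p,1)$ and using the homogeneity identity $\nabla_\xi F(\xi)\cdot\xi=F(\xi)$, a computation gives $-F_{p,w}(z^\theta)\ge c\,z^{\theta(p-1)-p}$ near $\partial\Om$ and $-F_{p,w}(z^\theta)\ge c>0$ on each set $\{z\ge\delta\}$. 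Because $\theta<1<p/(p-1+\gamma)$ forces $\theta(p-1)-p<-\theta\gamma$, the singular term $\la h(z^\theta)(z^\theta)^{-\gamma}$ (of order $d(\cdot,\partial\Om)^{-\theta\gamma}$ near $\partial\Om$, as $h(0)>0$) is dominated near the boundary, while the second limit in $(h_2)$ yields $\la h(Mz^\theta)(Mz^\theta)^{-\gamma}=o(M^{p-1})$ on $\{z\ge\delta\}$; hence $\overline u:=Mz^\theta$ is a weak supersolution for $M$ large, and enlarging $M$ (shrinking $\epsilon$) gives $\underline u\le\overline u$ in $\Om$.

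\emph{Approximating problems and uniform bounds.} For $n\ge1$ set $g_n(x,t):=\la h(t_+)\,(t_++\tfrac1n)^{-\gamma}$ with $t_+:=\max\{t,0\}$; then $g_n$ is bounded and Carath\'eodory, $0\le g_n\le g$, and $g_n$ has growth below $p-1$ at infinity by $(h_2)$. One checks that $\underline u$ (for all large $n$, using the slack in the displayed inequality and that $g_n(x,\underline u)\sim n^{\gamma}\to\infty$ near $\partial\Om$) and $\overline u$ (since $g_n\le g$) are a sub- and a supersolution of $-F_{p,w}u=g_n(x,u)$. Minimizing
\[
u\longmapsto \frac1p\int_\Om wF(\nabla u)^p\,dx-\int_\Om\int_0^u g_n\big(x,\mathrm{med}(\underline u,\tau,\overline u)\big)\,d\tau\,dx
\]
over $W_0^{1,p}(\Om,w)$ — which is coercive and weakly lower semicontinuous because the truncated integrand is bounded and $W_0^{1,p}(\Om,w)\hookrightarrow L^p(\Om)$ compactly (Lemma \ref{emb}, since $s>N/p$ gives $p<p_s^{*}$) — produces $u_n$, and the standard comparison argument for the monotone operator $F_{p,w}$ (testing with $(u_n-\overline u)_+$ and $(\underline u-u_n)_+$, on which sets the truncated nonlinearity reduces to the fixed functions $g_n(\cdot,\overline u)$ resp. $g_n(\cdot,\underline u)$) gives $\underline u\le u_n\le\overline u$ in $\Om$. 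Thus the median is inactive, $u_n$ solves $-F_{p,w}u_n=g_n(x,u_n)$ weakly in $W_0^{1,p}(\Om,w)$, and $0<\underline u\le u_n\le\|\overline u\|_\infty$. Testing the equation with $u_n$ (using $\nabla_\xi F(\xi)\cdot\xi=F(\xi)$, $u_n\le\|\overline u\|_\infty$ and $(u_n+\tfrac1n)^{-\gamma}\le\underline u^{-\gamma}$) yields $\int_\Om wF(\nabla u_n)^p\,dx\le C\int_\Om\underline u^{-\gamma}\,dx<\infty$ uniformly in $n$, the last integral being finite because $\gamma<1$ and $\underline u$ is comparable to $d(\cdot,\partial\Om)$ near $\partial\Om$.

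\emph{Passage to the limit.} Along a subsequence, $u_n\rightharpoonup u$ in $W_0^{1,p}(\Om,w)$ and $u_n\to u$ in $L^p(\Om)$ and a.e., with $\underline u\le u\le\overline u$; hence $u\in W_0^{1,p}(\Om,w)\cap L^\infty(\Om)$, $u>0$ in $\Om$, and $u\ge\epsilon c(\omega)>0$ on $\omega\Subset\Om$. Testing $-F_{p,w}u_n=g_n(x,u_n)$ with $u_n-u$, the right-hand side tends to $0$ by dominated convergence (since $|g_n(x,u_n)(u_n-u)|\le C\underline u^{-\gamma}\in L^1(\Om)$ and $u_n-u\to0$ a.e.), while $\int_\Om wF(\nabla u)^{p-1}\nabla_\xi F(\nabla u)\cdot\nabla(u_n-u)\,dx\to0$ by weak convergence; therefore
\[
\int_\Om w\,\big[F(\nabla u_n)^{p-1}\nabla_\xi F(\nabla u_n)-F(\nabla u)^{p-1}\nabla_\xi F(\nabla u)\big]\cdot\nabla(u_n-u)\,dx\longrightarrow0,
\]
and the strict monotonicity coming from (H4) — where the hypothesis $p\ge2$ supplies the lower bound $c_p\,w|\nabla u_n-\nabla u|^p$ for the integrand — forces $\nabla u_n\to\nabla u$ a.e. in $\Om$ along a further subsequence. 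Passing to the limit in the weak formulation for fixed $\phi\in C_c^1(\Om)$ then finishes: the left-hand side converges by the a.e. convergence of gradients together with the uniform bound on $\{wF(\nabla u_n)^{p-1}\nabla_\xi F(\nabla u_n)\}$ in the appropriate dual space, and the right-hand side converges by dominated convergence, since $g_n(x,u_n)$ is uniformly bounded on $\mathrm{supp}\,\phi$ (where $u_n\ge\epsilon c(\omega)>0$) and $g_n(x,u_n)\to\la h(u)u^{-\gamma}$ a.e. Hence $u\in W_0^{1,p}(\Om,w)\cap L^\infty(\Om)$ is the desired weak solution.

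\emph{Main difficulty.} The whole argument hinges on the first two steps in a degenerate weighted anisotropic setting where no classical ($C^{1}$-type) regularity is available: one must design $\overline u$ so that it simultaneously controls the boundary singularity (via $(h_1)$) and the growth at infinity (via the second limit in $(h_2)$), and one must run the sandwich $\underline u\le u_n\le\overline u$ purely through the monotonicity of $F_{p,w}$ and the embedding of Lemma \ref{emb}, rather than through gradient estimates.
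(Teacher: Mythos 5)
Your overall sandwich strategy and your subsolution $\uline{u}=\epsilon\phi_1$ match the paper's (there $\uline{u}=a_\la e_1$, inequality \eqref{ala}). The genuine gap is in the supersolution. You take $\oline{u}=Mz^\theta$ with $-F_{p,w}z=1$, and you justify the key lower bound $-F_{p,w}(z^\theta)\gtrsim z^{\theta(p-1)-p}$, the membership $z^\theta\in W_0^{1,p}(\Om,w)$, and later the finiteness of $\int_\Om\uline{u}^{-\gamma}\,dx$, by asserting that $z$ (respectively $\phi_1$) is comparable to $d(\cdot,\partial\Om)$ near the boundary with well-behaved gradient. In the degenerate weighted setting — $w\in W_p^{s}$ may vanish or blow up — no Hopf lemma, boundary Lipschitz estimate, or $C^1$ regularity is available; the paper explicitly lists the absence of such regularity as the main obstruction. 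Without a lower bound on $wF(\nabla z)^p$ near $\partial\Om$, the chain-rule computation only yields $-F_{p,w}(z^\theta)\ge\theta^{p-1}z^{-(1-\theta)(p-1)}$ (the extra term $(p-1)\theta^{p-1}(1-\theta)z^{\theta(p-1)-p}wF(\nabla z)^p$ is nonnegative but cannot be bounded below), and even that presupposes a weak chain rule and $z^\theta\in W_0^{1,p}(\Om,w)$, neither of which you establish. The paper sidesteps all of this: it takes $\oline{u}=b_\la v_0$ where $v_0\in W_0^{1,p}(\Om,w)\cap L^\infty(\Om)$ solves the auxiliary purely singular problem $-F_{p,w}v_0=v_0^{-\gamma}$ (existence imported from \cite{Gpadm}, boundedness via Lemma \ref{nlem}); then $-F_{p,w}(b_\la v_0)=b_\la^{p-1}v_0^{-\gamma}\ge\la\oline{u}^{-\gamma}h(\oline{u})$ follows from homogeneity \eqref{homo} and the second limit in $(h_2)$, with no boundary analysis. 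If you substitute this $\oline{u}$, most of your remaining steps survive, provided you also replace the estimates that invoke $\uline{u}\sim d(\cdot,\partial\Om)$: e.g. the uniform energy bound follows from $(u_n+\tfrac1n)^{-\gamma}u_n\le u_n^{1-\gamma}\le\|\oline{u}\|_\infty^{1-\gamma}$ rather than from $\uline{u}^{-\gamma}\in L^1(\Om)$, and in the limit passage the singular term should be handled on $\mathrm{supp}\,\phi$, where $u_n\ge\uline{u}\ge c(\omega)>0$, as in the paper.

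As a secondary point, your middle step (bounded truncations $g_n$, minimization of the truncated functional, comparison via $(u_n-\oline{u})^+$) is a legitimate alternative to the paper's route, which instead minimizes $J_\la$ directly over the order interval $S=\{\uline{u}\le v\le\oline{u}\}$ and shows the minimizer solves \eqref{maineqn} by Haitao's perturbation argument (Lemma \ref{Subsuplemma}); your version additionally needs a density step to test the sub/supersolution inequalities of Definition \ref{wksoldef} with $W_0^{1,p}(\Om,w)$ functions such as $(u_n-\oline{u})^+$, in the spirit of Remark \ref{tstrmk}. Note also that the paper's eigenvalue equation \eqref{aevp} carries the weight $w$ on the right-hand side; your unweighted normalization ($a\equiv1$ in \eqref{evp}) is admissible and in fact cleaner for the subsolution inequality.
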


\begin{Theorem}\label{pu2thm2}
If $F_{p,w}=\Delta_{p,w}$ or $S_{p,w}$ given by \eqref{ex}, then Theorem \ref{pu2thm} holds for any $1<p<\infty$.
\end{Theorem}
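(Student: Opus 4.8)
The plan is to reopen the proof of Theorem \ref{pu2thm} and to pin down the step at which $p\geq 2$ is actually invoked; I expect that, once the argument is transplanted to the weighted space through the embedding of Lemma \ref{emb}, the only genuine obstruction is a quantitative monotonicity estimate for the operator, and that this obstruction disappears for the two concrete operators in \eqref{ex}. Writing $A(\xi):=F(\xi)^{p-1}\nabla_{\xi}F(\xi)$, the estimate in question is $\langle A(\xi)-A(\eta),\,\xi-\eta\rangle\gtrsim|\xi-\eta|^{p}$, which for a general Finsler norm is only at hand for $p\geq 2$; it is used to upgrade the weak convergence of the gradients of the approximating sequence (obtained by regularizing the singular nonlinearity $(g_1)$) to strong convergence in $W^{1,p}_0(\Om,w)$, and it underlies the weak comparison principle that orders the sub- and supersolution built following \cite{KLS, YHaitao}. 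All the remaining ingredients — the explicit construction of the sub- and supersolution out of powers of an auxiliary solution vanishing on $\partial\Om$, the interior local boundedness, the local H\"older continuity and Harnack inequality, and the strong maximum principle giving the positivity $u\geq c(\omega)>0$ on $\omega\Subset\Om$ required in Definition \ref{wksoldef} — hold for the operators $\Delta_{p,w}$ and $S_{p,w}$, for any $1<p<\infty$ and any $p$-admissible weight, by the theory of \cite{Juh}; and the $L^\infty$ bound of Theorem \ref{pu2thm} is then immediate, since the solution is squeezed beneath the supersolution, which is bounded and continuous on $\overline{\Om}$ by the same theory.

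It therefore remains to replace the quantitative monotonicity for $1<p<2$. For $F_{p,w}=\Delta_{p,w}$, $A(\xi)=|\xi|^{p-2}\xi$ and one uses the classical vector inequality $\langle|\xi|^{p-2}\xi-|\eta|^{p-2}\eta,\,\xi-\eta\rangle\geq c_p\,|\xi-\eta|^{2}\,(|\xi|+|\eta|)^{p-2}$; combined with H\"older's inequality and a uniform $W^{1,p}_0(\Om,w)$ bound on the approximating sequence (independent of the regularization parameter), this still forces strong convergence of the gradients, and it equally delivers the weak comparison principle on the order interval. For $F_{p,w}=S_{p,w}$ the operator decouples, $A(\xi)=(|\xi_1|^{p-2}\xi_1,\ldots,|\xi_N|^{p-2}\xi_N)$, so the same scalar inequality applies in each coordinate and, summed over $i$, supplies exactly the needed substitute for the gradient compactness and for comparison. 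In both cases the proof of Theorem \ref{pu2thm} then goes through verbatim for every $1<p<\infty$.

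The part that will take real care is precisely this $1<p<2$ bookkeeping: one must check that the degenerate inequality above — rather than the cleaner $\langle A(\xi)-A(\eta),\xi-\eta\rangle\gtrsim|\xi-\eta|^{p}$ — is genuinely enough at each occurrence, in particular in the limit passage for the truncated equations and in the comparison argument against the singular datum $\la h(u)u^{-\gamma}$, and that the boundary behaviour of the auxiliary function on which the supersolution (and hence the whole construction) rests is indeed available for $\Delta_{p,w}$ and $S_{p,w}$ in the weighted, possibly degenerate, framework.
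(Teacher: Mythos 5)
Your proposal is correct and matches the paper's own (very brief) argument: the only place where $2\leq p<\infty$ enters Theorem \ref{pu2thm} is through the anisotropic monotonicity inequality of Lemma \ref{alg}, and for $\Delta_{p,w}$ (directly) and $S_{p,w}$ (coordinatewise) this is replaced by the classical inequality of Lemma \ref{AI}, which covers $1<p<2$ as well; this is exactly what Remarks \ref{Subsuplemmarmk} and \ref{evpthmrmk} record before the proof of Theorem \ref{pu2thm} is repeated verbatim. The only minor discrepancy is that you describe the underlying existence proof as a regularization-plus-strong-gradient-convergence scheme, whereas the paper's Theorem \ref{pu2thm} proceeds by sub-/supersolutions and constrained minimization (Lemma \ref{Subsuplemma}), but the monotonicity inequality is used there in the same role, so the substitution you propose is the same one the paper makes.
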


\begin{Remark}\label{pu2thm2rmk}
If $F_{p,w}=\Delta_{p,w}$, then Theorem \ref{pu2thm2} extends \cite[Theorem 2.10]{GMmed} to the class of $p$-admissible weights $W_p^{s}$.  
\end{Remark}

\begin{Theorem}\label{mthm}
Let $2\leq p<\infty$, $0<\gamma<1$ and $g$ is of the form $(g_2)$. Suppose that $w\in W_p^{s}$ for some $s\in I$ and $q\in(p-1,p_s^{*}-1)$, where $p_s^{*}=\frac{Np_{s}}{N-p_{s}}$ if $1\leq p_s<N$, $p_s^{*}=\infty$ if $p_s\geq N$. Then there exists $\Lambda>0$ such that for every $\lambda\in(0,\Lambda)$ the problem \eqref{psp} has at least two different weak solutions in $W_0^{1,p}(\Omega,w)$. 
\end{Theorem}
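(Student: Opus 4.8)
The plan is to follow the classical Ambrosetti--Brezis--Cerami / Arcoya--Boccardo strategy adapted to the weighted anisotropic setting, producing one solution by sub--supersolution together with minimization on a suitable order interval, and a second solution by a mountain pass argument in the translated functional. First I would dispose of the singular term: set $g(x,u)=\la u^{-\gamma}+u^q$ and, as in the proof of Theorem \ref{pu2thm}, work with the truncated problems $-F_{p,w}u_n=\la (u_n+\tfrac1n)^{-\gamma}+(u_n^+)^q$ and pass to the limit using the uniform local lower bounds $u\ge c(\omega)>0$ guaranteed by Definition \ref{wksoldef} and the strong maximum principle type estimate obtained via the eigenfunction $\phi_1$ of \eqref{wevp} (proved in the appendix). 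The first solution $u_\la$ is obtained as follows: for $\la$ small the pair $(\underline u,\overline u)=(\epsilon\phi_1^{\,\theta}, M e_1)$ — with $\theta=p/(p-1+\gamma)$ as in Haitao \cite{YHaitao} and $e_1$ the solution of $-F_{p,w}e_1=1$, both lying in $W_0^{1,p}(\Om,w)\cap L^\infty(\Om)$ by Lemma \ref{emb} and the appendix — is an ordered sub--supersolution pair once $\la<\La_1$; minimizing the associated energy
\[
J_\la(v)=\frac1p\int_\Om w F(\nabla v)^p\,dx-\frac{\la}{1-\gamma}\int_\Om (v^+)^{1-\gamma}\,dx-\frac1{q+1}\int_\Om (v^+)^{q+1}\,dx
\]
over the closed convex set $\{v\in W_0^{1,p}(\Om,w):\underline u\le v\le\overline u\}$ yields $u_\la$; weak lower semicontinuity of the leading term follows from convexity of $\xi\mapsto F(\xi)^p$ (via (H2),(H4)), and the lower-order terms are continuous by the compact embedding $W^{1,p}_0(\Om,w)\hookrightarrow L^{q+1}(\Om)$ of Lemma \ref{emb}, using precisely $q<p_s^*-1$.

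For the second solution I would translate: write $u=u_\la+v$ and consider the functional whose critical points above $0$ give solutions exceeding $u_\la$, namely
\[
\widetilde J_\la(v)=\frac1p\int_\Om w F(\nabla(u_\la+v))^p\,dx-\int_\Om G(x,u_\la+v^+)\,dx,
\]
where $G$ is the primitive of $g$ truncated below at $u_\la$. The key is that $\widetilde J_\la(v)-\widetilde J_\la(0)$ has the mountain pass geometry: local minimum at $v=0$ (because $u_\la$ is a minimizer on the order interval, and one upgrades this to a local $W^{1,p}_0(\Om,w)$-minimizer by the weighted/anisotropic analogue of the Brezis--Nirenberg $C^1$-vs-$W^{1,p}$ argument, again leaning on the $L^\infty$-bound and the Moser-type regularity that the $p$-admissibility $w\in W_p^s$ transfers from the unweighted case), and the superlinearity $q>p-1$ makes $\widetilde J_\la(tw_0)\to-\infty$ along any fixed positive direction, giving a point below the level at $0$. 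A Palais--Smale sequence at the mountain pass level is bounded by the standard $\widetilde J_\la-\tfrac1{q+1}\langle\widetilde J_\la',\cdot\rangle$ computation (here $q+1>p$ is used), and it converges strongly via the $(S_+)$ property of the weighted anisotropic operator $v\mapsto\int w F(\nabla v)^{p-1}\nabla_\xi F(\nabla v)\cdot\nabla(\cdot)$ together with the compactness of the lower-order terms from Lemma \ref{emb}. The resulting critical point $v_\la\ne0$ produces a second weak solution $u_\la+v_\la>u_\la$ of \eqref{maineqn}.

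The main obstacle I expect is the passage from "local minimizer in the order-interval / $L^\infty$ topology" to "local minimizer in $W^{1,p}_0(\Om,w)$", i.e. the Brezis--Nirenberg argument, because in the weighted anisotropic setting the requisite global $C^{1,\alpha}$ regularity up to the boundary is not available in the literature; the workaround is to avoid $C^1$ regularity entirely and argue directly in the weighted space — using that $u_\la\in L^\infty$, that competitors can be truncated between $\underline u$ and $\overline u$ with controlled energy, and that the singular term $\la (u^+)^{1-\gamma}$ is handled by the elementary inequality exploited in Arcoya--Boccardo \cite{arcoyaBoc} and in \cite{GST,BG}. A secondary technical point is making the formal test-function manipulations rigorous near $\partial\Om$ where the singular nonlinearity is not integrable a priori; this is resolved exactly as in Remark \ref{tstrmk} and the proof of Theorem \ref{pu2thm}, testing with $(u_\la+v)$-based functions that vanish suitably and using the local positivity $c(\omega)$. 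Finally one notes $\Om$ smooth and $p\ge2$ are used (as in the hypotheses) to ensure the monotonicity/convexity estimates for $F(\nabla\cdot)^p$ and the $(S_+)$ property go through cleanly.
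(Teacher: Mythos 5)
Your overall strategy (order-interval minimization for a first solution, then a translated functional and mountain pass for a second) is the Ambrosetti--Brezis--Cerami/Giacomoni--Schindler--Tak\'a\v{c} route, and it is genuinely different from what the paper does. The paper never forms a sub--supersolution pair for $(g_2)$ and never translates the functional: it regularizes the singularity as $-F_{p,w}u=\la(u^++\epsilon)^{-\gamma}+(u^+)^q$, proves mountain pass geometry and the Palais--Smale condition for the regularized functional $I_{\la,\epsilon}$, and extracts \emph{two} critical points of $I_{\la,\epsilon}$ directly --- a minimizer $\nu_\epsilon$ of $I_{\la,\epsilon}$ over the norm-ball $\{\|v\|\le R\}$ with $I_{\la,\epsilon}(\nu_\epsilon)<0$, and a mountain pass point $\zeta_\epsilon$ with $I_{\la,\epsilon}(\zeta_\epsilon)\ge\rho>0$. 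Uniform (in $\epsilon$) energy bounds and a uniform local lower bound $v_\epsilon\ge\xi\ge c(\omega)>0$ (by comparison with the solution of $-F_{p,w}\xi=C$) allow passage to the limit $\epsilon\to0^+$, and the two limits are distinguished because the energy levels converge and remain separated by $0$. Because the candidate minimizer lives in a ball on whose boundary sphere the functional is $\ge\rho>0$ while the infimum inside is negative, it is automatically a critical point in the $W_0^{1,p}(\Om,w)$ topology; no comparison of topologies is ever needed.

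This matters because the step you yourself flag as the ``main obstacle'' is a genuine gap that your proposal does not close. Your second solution requires $v=0$ to be a local minimizer of the translated functional $\widetilde J_\la$ in the $W_0^{1,p}(\Om,w)$ topology, which you propose to obtain by upgrading the order-interval (essentially $C^0$/$L^\infty$) minimizer $u_\la$ via a Brezis--Nirenberg ``$C^1$ versus $W^{1,p}$'' argument. As you note, that argument rests on global $C^{1,\alpha}$ regularity up to the boundary, which is not available for degenerate $p$-admissible weights $w\in W_p^s$ that may vanish or blow up (nor for the anisotropic operator $F_{p,w}$). The proposed workaround --- ``argue directly in the weighted space'' using truncation between $\underline u$ and $\overline u$ --- is not an argument: truncating a competitor $v$ into the order interval does not in general decrease $\widetilde J_\la$ by a controllable amount when $v$ is only close to $u_\la$ in norm, and without this the mountain ridge around $0$ is unproven and the whole mountain pass construction for the second solution collapses. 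A secondary, smaller issue is that your subsolution $\underline u=\epsilon\phi_1^{\theta}$ requires computing $-F_{p,w}(\phi_1^\theta)$ via a chain rule and pointwise regularity of $\phi_1$ that the appendix does not provide (it only gives $e_1\in W_0^{1,p}(\Om,w)\cap L^\infty$ with interior positivity). If you want to salvage your outline, the cleanest fix is precisely the paper's: obtain the local minimizer as a ball minimizer of the $\epsilon$-regularized functional, where the separation $\inf_{\|v\|=R}I_{\la,\epsilon}\ge\rho>0>\inf_{\|v\|\le R}I_{\la,\epsilon}$ makes the topology question moot, and defer all singular-term issues to the final limit $\epsilon\to0^+$.
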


\begin{Remark}\label{wrmk2}
If $c\leq w\leq d$ for some positive constants $c,d$, then noting Remark \ref{Embrmk} and arguing similarly, Theorem \ref{mthm} will hold by replacing $p_s$ and $p_s^{*}$ with $p$ and $p^{*}$ respectively.
\end{Remark}

\begin{Theorem}\label{mthm2}
If $F_{p,w}=\Delta_{p,w}$ or $S_{p,w}$ given by \eqref{ex}, then Theorem \ref{mthm} holds for any $1<p<\infty$.
\end{Theorem}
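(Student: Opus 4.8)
The plan is to show that the two special operators $\Delta_{p,w}$ and $S_{p,w}$ allow the restriction $p\ge 2$ in Theorem \ref{mthm} to be relaxed to $1<p<\infty$, by tracing exactly where that restriction is used in the proof of Theorem \ref{mthm} and checking that it can be dispensed with for these two Finsler norms. Recall from Remark \ref{exrmk2} that $\Delta_{p,w}$ corresponds to $F=F_2$ and $S_{p,w}$ to $F=F_p$, both of which are smooth away from the origin and uniformly convex, so (H0)--(H4) hold and the variational machinery of Section~2 (the embedding Lemma \ref{emb}, the equivalent norm \eqref{equinorm}, and the eigenvalue problem \eqref{wevp} treated in the appendix) is available verbatim. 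Thus the only issue is the analytic point where $p\ge 2$ entered.

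First I would isolate that point. In the Arcoya--Boccardo variational scheme \cite{arcoyaBoc} adapted here, $p\ge2$ is typically invoked in two places: (a) the construction and $W^{1,p}_0(\Om,w)$-regularity of the subsolution $\underline u$ near the boundary (via comparison with the torsion function / first eigenfunction of \eqref{wevp}), where one needs $\underline u^{-\gamma}\in$ a suitable dual space and uses the growth $h(t)/t^{\gamma+p-1}$; and (b) the strong-convexity / well-known algebraic inequality $\langle |a|^{p-2}a-|b|^{p-2}b,\,a-b\rangle\gtrsim |a-b|^p$, which for $1<p<2$ degenerates and must be replaced by its $1<p<2$ counterpart $\gtrsim |a-b|^2/(|a|+|b|)^{2-p}$. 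For the two operators in question, both the vector field $\xi\mapsto|\xi|^{p-2}\xi$ (isotropic) and the diagonal field $\xi\mapsto(|\xi_1|^{p-2}\xi_1,\dots,|\xi_N|^{p-2}\xi_N)$ satisfy these coordinatewise monotonicity estimates for all $p>1$; so the comparison principle (needed to get the ordered pair of sub/supersolutions and to apply the truncation argument for the singular term) and the strict monotonicity needed for uniqueness of the minimizer in the truncated problem both go through for $1<p<\infty$. I would state these as two short lemmas (or cite \cite{BG, GST} for the $1<p<2$ algebraic inequalities and the comparison principle for $\Delta_{p,w}$, and note the pseudo-$p$-Laplacian case follows by summing over $i$).

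The body of the argument then mirrors the proof of Theorem \ref{mthm} step by step: (1) build the first (local) solution $u_\la$ of \eqref{maineqn} with $g=(g_2)$ for small $\la$ as the minimizer of the truncated energy on a suitable closed convex set, using the sub/supersolution pair built from the first eigenfunction $\phi_1$ of \eqref{wevp} (whose positivity, boundedness and interior lower bound come from the appendix) and from a large constant, exactly as in the purely singular case of Theorem \ref{pu2thm2}; here $q\in(p-1,p_s^*-1)$ guarantees the perturbation $u^q$ is subcritical via Lemma \ref{emb}. (2) Produce the second solution of mountain-pass type: working on the shifted functional $J(v)=\frac1p\int w F(\nabla v)^p - \frac{\la}{1-\gamma}\int (v+u_\la)^{1-\gamma}+\dots$ around $u_\la$, verify the mountain-pass geometry (using subcriticality of $q$ and the embedding), check a Palais--Smale / Cerami condition (here the reflexivity and uniform convexity of $W^{1,p}_0(\Om,w)$ plus the $S_+$-property of $F_{p,w}$ — valid for $\Delta_{p,w}$ and $S_{p,w}$ for all $p>1$ — give strong convergence of PS sequences), and conclude a second critical point $u_\la+v$ with $v\not\equiv0$, hence $u_\la+v\ne u_\la$. (3) Check both solutions are genuine weak solutions in the sense of Definition \ref{wksoldef}, i.e.\ the interior positivity bound $u\ge c(\omega)>0$ and admissibility of test functions in $W^{1,p}_0(\Om,w)$ (Remark \ref{tstrmk}).

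The main obstacle I expect is step (2), specifically verifying the Palais--Smale condition and the $S_+$-property of the weighted operator for $1<p<2$: in this range the standard Clarkson-type and algebraic inequalities are weaker, so extracting strong convergence of a PS sequence $(v_n)$ requires the refined inequality $\int w\,\langle \mathcal A(\nabla v_n)-\mathcal A(\nabla v),\nabla(v_n-v)\rangle \ge c\int w\,|\nabla(v_n-v)|^p/(1+|\nabla v_n|+|\nabla v|)^{2-p}$ together with Hölder and the (weighted) boundedness of $(v_n)$ — a routine but delicate estimate. A secondary difficulty is that the singular term $(v+u_\la)^{-\gamma}$ is only locally integrable, so one must handle it by truncation/monotone convergence both in the energy estimates and when passing to the limit in the Euler--Lagrange equation, exactly as in \cite{arcoyaBoc, BG}; for $\Delta_{p,w}$ this recovers and extends \cite[Theorem~2.10]{GMmed} type multiplicity results to the full range $p>1$, and the pseudo-$p$-Laplacian case is obtained by the same argument applied componentwise.
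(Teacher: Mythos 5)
Your central idea is exactly the paper's: the only place $p\ge 2$ enters Theorem \ref{mthm} is through the anisotropic algebraic inequality of Lemma \ref{alg}, and for $\Delta_{p,w}$ and $S_{p,w}$ the monotonicity of the vector field is governed instead by Lemma \ref{AI} (applied isotropically, resp.\ coordinatewise), which covers all $1<p<\infty$. The paper's proof of Theorem \ref{mthm2} is literally this observation: invoke Lemma \ref{AI} together with Remarks \ref{nonnegrmk}, \ref{apriorinew} and \ref{evpthmrmk} (which record that the Palais--Smale condition, nonnegativity, the uniform a priori bound and the eigenvalue lemma all survive for $1<p<2$ in these two cases) and rerun the proof of Theorem \ref{mthm}. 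Your diagnosis of the $1<p<2$ difficulty in the $(PS)_c$ step --- that one must use the degenerate form $\langle |a|^{p-2}a-|b|^{p-2}b,a-b\rangle\ge C|a-b|^2/(|a|+|b|)^{2-p}$ plus H\"older and the boundedness of the sequence --- is precisely the content hidden in Remark \ref{nonnegrmk}, so on the essential point your proposal and the paper agree.

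Where you diverge is in how you reconstruct the body of Theorem \ref{mthm} itself. The paper does \emph{not} build the first solution by sub/supersolutions and then run a mountain pass on a functional translated by $u_\la$. It regularizes the singularity, replacing $u^{-\gamma}$ by $(u^++\epsilon)^{-\gamma}$, so that $I_{\la,\epsilon}$ is $C^1$ on $W_0^{1,p}(\Om,w)$; it then produces \emph{two} critical points of $I_{\la,\epsilon}$ (a mountain-pass point $\zeta_\epsilon$ and a negative-energy local minimizer $\nu_\epsilon$ in the ball $\|v\|\le R$), proves bounds and an interior lower bound $v_\epsilon\ge c(\omega)>0$ uniform in $\epsilon$, passes to the limit $\epsilon\to 0^+$, and separates the two limits via convergence of the energies and \eqref{limit-pass}. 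Your translated-functional route would additionally require showing that the first solution is a $W_0^{1,p}(\Om,w)$-local minimizer of the genuinely singular (hence non-$C^1$) energy $I_\la$ --- the ``Sobolev versus H\"older minimizers'' issue of \cite{GST} --- which is a serious extra obstacle in the weighted degenerate setting where $C^{1,\alpha}$ regularity is unavailable; the paper's $\epsilon$-approximation scheme is designed to avoid exactly this. So your proposal identifies the right reason the theorem is true, but the multiplicity scheme you sketch is not the one the paper uses and, as written, has a gap at that local-minimizer step that the paper's route does not need to fill.
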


\begin{Remark}\label{mthm2rmk}
If $F_{p,w}=\Delta_{p,w}$, then Theorem \ref{mthm2} extends \cite[Theorem 2.11]{GMmed} to any $1<p<\infty$ and to the class of $p$-admissible weights $W_p^{s}$. 
\end{Remark}

\textbf{Auxiliary results:} We end this section by stating some auxiliary results. First we note the following remark.

\begin{Remark}\label{hypormk2}
Since all norms in $\mathbb{R}^N$ are equivalent, there exists positive constants $C_1, C_2$ such that
$$
C_1|x|\leq F(x)\leq C_2|x|,\quad\forall \,x\in\mathbb{R}^N.
$$
\end{Remark}

For some positive constants $c_1, c_2$, the next result follows from \cite[Corollary 2.2]{BGM}.

\begin{Lemma}\label{regrmk}
The following relations hold:
\begin{equation}\label{lbd}
F(x)^{p-1}\nabla_{\xi}F(x)\cdot x=F(x)^p\geq c_1|x|^p,\quad\forall\,x\in\mathbb{R}^N,
\end{equation}
\begin{equation}\label{ubd}
\big|F(x)^{p-1}\nabla_{\xi}F(x)\big|\leq c_2|x|^{p-1},\quad\forall\,x\in\mathbb{R}^N\text{ and}
\end{equation}
\begin{equation}\label{homo}
F(tx)^{p-1}\nabla_{\xi}F(tx)=|t|^{p-2}t F(x)^{p-1}\nabla_{\xi}F(x),\quad\forall\,x\in\mathbb{R}^N\text{ and }t\in\mathbb{R}\setminus\{0\}.
\end{equation}
\end{Lemma}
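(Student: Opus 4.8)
The plan is to read off all three relations from a single structural feature: hypothesis (H2) makes $F$ positively homogeneous of degree one, so its gradient $\nabla_\xi F$ is homogeneous of degree zero, and these two scaling laws, combined with the norm equivalence of Remark \ref{hypormk2} and the compactness of the unit sphere, deliver (\ref{lbd})--(\ref{homo}) after short computations. I would first record the \emph{Euler identity} for $F$. Since by (H3) $F$ is smooth on $\mathbb{R}^N\setminus\{0\}$ and $F(tx)=tF(x)$ for $t>0$, differentiating this in $t$ at $t=1$ gives $\nabla_\xi F(x)\cdot x=F(x)$ for every $x\neq 0$. Multiplying by $F(x)^{p-1}$ yields
\begin{equation*}
F(x)^{p-1}\nabla_\xi F(x)\cdot x=F(x)^{p-1}\,\bigl(\nabla_\xi F(x)\cdot x\bigr)=F(x)^{p},
\end{equation*}
which is the equality in (\ref{lbd}); the lower bound $F(x)^p\geq c_1|x|^p$ with $c_1=C_1^p$ then follows directly from $C_1|x|\leq F(x)$ in Remark \ref{hypormk2}. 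At $x=0$ both sides vanish, so the relation persists there under the standard convention (legitimate since $p>1$ forces $F(x)^{p-1}\to 0$ while $\nabla_\xi F$ stays bounded, see below).

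For the upper bound (\ref{ubd}) I would exploit that $\nabla_\xi F$ is $0$-homogeneous. Differentiating $F(tx)=tF(x)$ ($t>0$) in $x$ by the chain rule gives $t\,\nabla_\xi F(tx)=t\,\nabla_\xi F(x)$, hence $\nabla_\xi F(tx)=\nabla_\xi F(x)$ for all $t>0$ and $x\neq 0$; in particular $\nabla_\xi F(x)=\nabla_\xi F\bigl(x/|x|\bigr)$ depends only on the direction of $x$. By (H3) the continuous map $\nabla_\xi F$ attains a finite maximum $M:=\max_{|y|=1}|\nabla_\xi F(y)|$ on the compact unit sphere, so $|\nabla_\xi F(x)|\leq M$ for every $x\neq 0$. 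Combining this with the upper bound $F(x)\leq C_2|x|$ from Remark \ref{hypormk2} gives
\begin{equation*}
\bigl|F(x)^{p-1}\nabla_\xi F(x)\bigr|=F(x)^{p-1}\,|\nabla_\xi F(x)|\leq (C_2|x|)^{p-1}M=c_2|x|^{p-1},
\end{equation*}
with $c_2=C_2^{p-1}M$, which is (\ref{ubd}) (and this boundedness of $\nabla_\xi F$ is exactly what justifies the convention used above at the origin).

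Finally, for the homogeneity (\ref{homo}) I would treat general $t\in\mathbb{R}\setminus\{0\}$ using $F(tx)=|t|F(x)$, so $F(tx)^{p-1}=|t|^{p-1}F(x)^{p-1}$, together with the scaling of the gradient. Differentiating $F(tx)=|t|F(x)$ in $x$ yields $t\,\nabla_\xi F(tx)=|t|\,\nabla_\xi F(x)$, i.e. $\nabla_\xi F(tx)=\operatorname{sgn}(t)\,\nabla_\xi F(x)$; multiplying the two displays and using $|t|^{p-1}\operatorname{sgn}(t)=|t|^{p-2}t$ produces
\begin{equation*}
F(tx)^{p-1}\nabla_\xi F(tx)=|t|^{p-1}\operatorname{sgn}(t)\,F(x)^{p-1}\nabla_\xi F(x)=|t|^{p-2}t\,F(x)^{p-1}\nabla_\xi F(x),
\end{equation*}
as claimed. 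I expect the only delicate point to be the sign bookkeeping for $t<0$ in this last step: the factor $\operatorname{sgn}(t)$ coming from $|t|/t$ must be tracked carefully and then repackaged as $|t|^{p-2}t$, whereas for $t>0$ every scaling law is transparent. Note also that hypothesis (H4), which guarantees strict convexity and hence the classical differentiability used throughout, is what makes these elementary manipulations valid away from the origin.
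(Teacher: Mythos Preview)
Your argument is correct: Euler's identity from positive $1$-homogeneity gives the equality in \eqref{lbd}, the norm equivalence of Remark~\ref{hypormk2} supplies the lower bound, the $0$-homogeneity of $\nabla_\xi F$ together with compactness of the unit sphere gives \eqref{ubd}, and the sign bookkeeping $|t|^{p-1}\operatorname{sgn}(t)=|t|^{p-2}t$ handles \eqref{homo}. The paper itself does not prove this lemma at all --- it simply cites \cite[Corollary~2.2]{BGM} --- so you are supplying the standard details behind that citation rather than offering an alternative route. One small inaccuracy: differentiability of $F$ away from the origin comes from (H3), not (H4); the convexity hypothesis (H4) is not actually needed for this lemma.
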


%The following result follows from \cite[Corollary 2.9]{BGM}.
%\begin{Lemma}\label{dualthmcor}
%For any $x,y \in \mb R^N$, it holds that
%\begin{equation}\label{dualthmapp3}
%\langle F(x)^{p-1}\nabla_{\xi}F(x)-F(y)^{p-1}\nabla_{\xi}F(y),x-y\rangle\geq \left(F(x)^{p-1}-F(y)^{p-1}\big)\big(F(x)-F(y)\right).
%\end{equation}
%\end{Lemma}

Also, we have the following anisotropic algebraic inequality from \cite[Lemma 2.5]{BGM}.

\begin{Lemma}\label{alg}
Let $2\leq p<\infty$. Then, for every $x,y\in\mathbb{R}^N$, there exists a positive constant $c$ such that
\begin{equation}\label{algineq}
\begin{split}
\langle F(x)^{p-1}\nabla_{\xi}F(x)-F(y)^{p-1}\nabla_{\xi}F(y),x-y\rangle&\geq
cF(x-y)^p.
\end{split}
\end{equation}
\end{Lemma}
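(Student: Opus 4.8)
The plan is to recognize the vector field $a(z) := F(z)^{p-1}\nabla_{\xi}F(z)$ as the gradient (in the $\xi$-variable) of the convex potential $H(z):=\tfrac1p F(z)^p$, so that the left-hand side of \eqref{algineq} is the monotonicity defect $\langle \nabla_{\xi}H(x)-\nabla_{\xi}H(y),\,x-y\rangle$, and then to bound it below by controlling the Hessian $\nabla^2_{\xi}H$ along the segment joining $y$ to $x$. If $x=y$ the inequality is trivial, so assume $x\neq y$. Since $F^p$ is positively $p$-homogeneous and $C^{\infty}$ off the origin, $H\in C^1(\R^N)$ with $\nabla_{\xi}H=a$ continuous on all of $\R^N$ (using $|a(z)|\leq c_2|z|^{p-1}$ from \eqref{ubd} together with $p\geq2$), while $a$ is $C^1$ away from $0$. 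Hence $t\mapsto \langle a(\gamma(t)),x-y\rangle$, with $\gamma(t):=y+t(x-y)$, is absolutely continuous on $[0,1]$ (it is $C^1$ except at the at most one $t_0$ where $\gamma(t_0)=0$, and continuous there), which gives the representation
\[
\langle a(x)-a(y),\,x-y\rangle=\int_0^1\big\langle \nabla^2_{\xi}H(\gamma(t))(x-y),\,x-y\big\rangle\,dt.
\]

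Next I would extract a uniform ellipticity bound from (H4). Writing $G:=\tfrac12 F^2$ and $A(z):=\nabla^2_{\xi}G(z)$, hypothesis (H4) makes $A(z)$ positive definite for $z\neq0$; by (H3) its smallest eigenvalue is continuous and positive on the compact sphere $\{|z|=1\}$, and since $G$ is $2$-homogeneous $A$ is $0$-homogeneous, so there is $\lambda_0>0$ with $\langle A(z)v,v\rangle\geq \lambda_0|v|^2$ for all $z\neq0$ and $v\in\R^N$. A direct computation with $H=\tfrac1p(2G)^{p/2}$ gives
\[
\nabla^2_{\xi}H=(p-2)(2G)^{\frac{p-4}{2}}\,\nabla_{\xi}G\otimes\nabla_{\xi}G+(2G)^{\frac{p-2}{2}}A.
\]
Because $p\geq2$ the rank-one term is positive semidefinite, so $\langle\nabla^2_{\xi}H(z)v,v\rangle\geq F(z)^{p-2}\langle A(z)v,v\rangle\geq \lambda_0 F(z)^{p-2}|v|^2$; and since $p-2\geq0$, Remark \ref{hypormk2} yields $F(z)^{p-2}\geq C_1^{p-2}|z|^{p-2}$, whence $\langle\nabla^2_{\xi}H(z)v,v\rangle\geq \lambda_0 C_1^{p-2}|z|^{p-2}|v|^2$. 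Inserting this into the integral representation with $v=x-y$ gives
\[
\langle a(x)-a(y),\,x-y\rangle\;\geq\;\lambda_0 C_1^{p-2}\,|x-y|^2\int_0^1|\gamma(t)|^{p-2}\,dt.
\]

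It then remains to prove the scalar estimate $\int_0^1|\gamma(t)|^{p-2}\,dt\geq c_p\,|x-y|^{p-2}$. With $v=x-y$, the map $t\mapsto|\gamma(t)|^2=|v|^2t^2+2\langle y,v\rangle t+|y|^2$ is a parabola with leading coefficient $|v|^2$ and nonnegative minimum, so completing the square gives $|\gamma(t)|^2\geq |v|^2(t-t_*)^2$ for its vertex $t_*$; raising to the power $\tfrac{p-2}{2}\geq0$ and integrating yields $\int_0^1|\gamma(t)|^{p-2}\,dt\geq |v|^{p-2}\int_0^1|t-t_*|^{p-2}\,dt$. An elementary optimization shows $\min_{t_*\in\R}\int_0^1|t-t_*|^{p-2}\,dt$ is a positive constant $c_p$ (attained at the midpoint $t_*=\tfrac12$, namely $c_p=\tfrac{1}{(p-1)2^{p-2}}$). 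Combining the displays and converting $|x-y|^p\geq C_2^{-p}F(x-y)^p$ via Remark \ref{hypormk2} produces \eqref{algineq} with $c=\lambda_0 C_1^{p-2}c_pC_2^{-p}$. The main obstacle I anticipate is the justification of the Hessian representation across the possible passage of $\gamma$ through the origin, handled by the continuity of $a$ and the integrability of $|z|^{p-2}$ for $p\geq2$; the sign condition $p\geq2$ is used essentially twice — to discard the rank-one term and to keep $s\mapsto s^{(p-2)/2}$ monotone — which is precisely why the hypothesis $2\leq p<\infty$ appears.
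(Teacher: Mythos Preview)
Your argument is correct. The paper itself does not prove this lemma: it is quoted verbatim as ``the following anisotropic algebraic inequality from \cite[Lemma 2.5]{BGM}'' with no proof given, so there is nothing to compare on the level of approach.

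That said, your route is the standard and natural one: write the operator as $\nabla_\xi H$ with $H=\tfrac1pF^p$, use the integral Hessian representation along the segment, extract uniform ellipticity from the $0$-homogeneous matrix $A=\nabla_\xi^2(\tfrac12 F^2)$ via (H4) and compactness of the sphere, and finish with the elementary one-dimensional bound $\int_0^1|y+t(x-y)|^{p-2}\,dt\geq c_p|x-y|^{p-2}$. The two places where $p\geq 2$ is used (nonnegativity of the rank-one term in $\nabla_\xi^2 H$ and monotonicity of $s\mapsto s^{(p-2)/2}$) are exactly the obstruction to $1<p<2$, consistent with the paper's separate treatment of that range via Lemma~\ref{AI} for the special operators $\Delta_{p,w}$ and $S_{p,w}$. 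One small remark: your handling of the possible passage of $\gamma$ through the origin is fine, and in fact for $p\geq 2$ the Hessian $\nabla_\xi^2 H$ stays bounded near $0$ (both the rank-one piece and $F^{p-2}A$ are $O(1)$), so the integrand is bounded and no delicate integrability argument is even needed.
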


Next, we state the algebraic inequality from Peral \cite[Lemma A.0.5]{Pe}.

\begin{Lemma}\label{AI}
For any $a,b\in\mathbb{R}^N$, there exists a positive constant $C$, such that
\begin{equation}\label{ALGin}
\langle |a|^{p-2}a-|b|^{p-2}b, a-b \rangle\geq
\begin{cases}
C|a-b|^p,\text{ if }2\leq p<\infty,\\
C\frac{|a-b|^2}{(|a|+|b|)^{2-p}},\text{ if }1<p<2.
\end{cases}
\end{equation}
\end{Lemma}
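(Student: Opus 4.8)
The plan is to obtain both estimates from a single integral identity along the segment joining $a$ and $b$. First I would introduce the vector field $\Phi(\xi):=|\xi|^{p-2}\xi$ on $\mathbb{R}^N$, which is $C^1$ on all of $\mathbb{R}^N$ when $p\ge 2$ and on $\mathbb{R}^N\setminus\{0\}$ when $1<p<2$, with Jacobian $D\Phi(\xi)=|\xi|^{p-2}\,\mathrm{Id}+(p-2)|\xi|^{p-4}\,\xi\otimes\xi$. The elementary point is that, by the Cauchy--Schwarz bound $0\le\langle\xi,\eta\rangle^2\le|\xi|^2|\eta|^2$,
\[
\langle D\Phi(\xi)\eta,\eta\rangle=|\xi|^{p-2}|\eta|^2+(p-2)|\xi|^{p-4}\langle\xi,\eta\rangle^2\ \ge\ m_p\,|\xi|^{p-2}|\eta|^2,\qquad \eta\in\mathbb{R}^N,
\]
with $m_p=1$ if $p\ge2$ and $m_p=p-1$ if $1<p<2$. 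Then, for $a\neq b$ such that the segment $\{b+t(a-b):t\in[0,1]\}$ avoids the origin, the fundamental theorem of calculus applied to $t\mapsto\langle\Phi(b+t(a-b)),a-b\rangle$ gives
\[
\langle|a|^{p-2}a-|b|^{p-2}b,\ a-b\rangle=\int_0^1\langle D\Phi(b+t(a-b))(a-b),\ a-b\rangle\,dt\ \ge\ m_p\,|a-b|^2\int_0^1|b+t(a-b)|^{p-2}\,dt.
\]
For $p\ge 2$ this holds without any restriction on the segment, since then $\Phi\in C^1(\mathbb{R}^N)$.

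For the range $1<p<2$ I would finish at once: since $p-2<0$ and $|b+t(a-b)|=|(1-t)b+ta|\le|a|+|b|$, one has $|b+t(a-b)|^{p-2}\ge(|a|+|b|)^{p-2}$, so the display above yields $\langle|a|^{p-2}a-|b|^{p-2}b,a-b\rangle\ge(p-1)\,|a-b|^2(|a|+|b|)^{p-2}$, i.e. the claim with $C=p-1$. The only configuration left out is the one in which $a$ and $b$ are non-positive multiples of one another (so the segment meets $0$, where $\Phi$ fails to be $C^1$); this I would dispatch by a direct one-line computation, which in fact gives the inequality with constant $1$ using subadditivity of $s\mapsto s^{p-1}$, and which in any case follows from the generic case by continuity of both sides in $b$.

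For $p\ge 2$ it remains to bound $\int_0^1|b+t(a-b)|^{p-2}\,dt$ below by a positive multiple of $|a-b|^{p-2}$. Completing the square gives $|b+t(a-b)|^2=|a-b|^2(t-t_0)^2+d^2$ with $t_0=-\langle b,a-b\rangle/|a-b|^2$ and $d^2\ge 0$, hence $|b+t(a-b)|\ge|a-b|\,|t-t_0|$, and since $p-2\ge 0$,
\[
\int_0^1|b+t(a-b)|^{p-2}\,dt\ \ge\ |a-b|^{p-2}\,\inf_{\tau\in\mathbb{R}}\int_0^1|t-\tau|^{p-2}\,dt\ =\ \frac{2^{2-p}}{p-1}\,|a-b|^{p-2},
\]
so that $\langle|a|^{p-2}a-|b|^{p-2}b,a-b\rangle\ge\frac{2^{2-p}}{p-1}|a-b|^p$, which finishes the proof.

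There is no serious obstacle here: this is a classical inequality, so in the paper one could simply cite it (e.g. Peral, or Lindqvist). If a self-contained argument is preferred, the only computation worth spelling out is the evaluation of $\inf_{\tau\in\mathbb{R}}\int_0^1|t-\tau|^{p-2}\,dt$ in the case $p\ge2$ (split into $\tau\le0$, $\tau\ge1$, $\tau\in[0,1]$ and use convexity of $s\mapsto s^{p-1}$, the minimum being attained at $\tau=\tfrac{1}{2}$), together with the minor bookkeeping at the origin when $1<p<2$; since only the existence of a positive constant $C$ is needed, sharpness of constants is irrelevant.
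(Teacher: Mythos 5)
Your argument is correct, and there is nothing in the paper to compare it against line by line: the paper does not prove Lemma \ref{AI} at all, it simply quotes it from Peral \cite[Lemma A.0.5]{Pe}. Your segment-integration proof is the standard derivation behind that citation, and every step checks out: the Jacobian formula $D\Phi(\xi)=|\xi|^{p-2}\mathrm{Id}+(p-2)|\xi|^{p-4}\,\xi\otimes\xi$, the ellipticity bound with $m_p=1$ for $p\ge2$ and $m_p=p-1$ for $1<p<2$ via Cauchy--Schwarz, the completing-the-square estimate $|b+t(a-b)|\ge|a-b|\,|t-t_0|$ with $t_0=-\langle b,a-b\rangle/|a-b|^2$, and the evaluation $\inf_{\tau}\int_0^1|t-\tau|^{p-2}\,dt=\frac{2^{2-p}}{p-1}$ (the infimum over $\tau\notin[0,1]$ is indeed no smaller, since $(1+s)^{p-1}-s^{p-1}\ge1$ for $s\ge0$, $p\ge2$). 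Two small points deserve a sentence in a written-up version: for $2\le p<3$ the claim $\Phi\in C^1(\mathbb{R}^N)$ needs the observation that $D\Phi(\xi)=O(|\xi|^{p-2})\to0$ as $\xi\to0$ (alternatively, the FTC identity holds because $\Phi$ is absolutely continuous along the segment and the integrand is integrable); and for $1<p<2$ the antiparallel case $a=-sb$, $s\ge0$, reduces to $s^{p-1}+1\ge C(s+1)^{p-1}$, which subadditivity gives with $C=1$, exactly as you say, so the continuity argument is not even needed. Since only the existence of some positive $C$ is used in the paper, your constants are more than sufficient.
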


We have the following result from \cite[Lemma $B.1$]{Gstam}.

\begin{Lemma}\label{useful for L infinity estimate}
Let $\phi(t),\,k_0\leq t < \infty,$ be nonnegative and nonincreasing such that
$$
\phi(h)\leq\frac{C}{(h-k)^l}\phi(k)^m,\;\;h > k > k_0,
$$
where $C,l,m$ are positive constants with $m > 1$. Then
$
\phi(k_0+d) = 0,
$
where
$$
d^l = C\phi(k_0)^{m-1}2^\frac{lm}{m-1}.
$$
\end{Lemma}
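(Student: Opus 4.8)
The plan is to prove this by Stampacchia's classical dyadic iteration. First I would introduce the increasing sequence of truncation levels
$$
k_n := k_0 + d\big(1 - 2^{-n}\big), \qquad n = 0, 1, 2, \ldots,
$$
so that $k_0$ is the starting level, the sequence is strictly increasing with $k_n < k_0 + d$ for every finite $n$, and $k_n \to k_0 + d$. The consecutive gaps are $k_{n+1} - k_n = d\,2^{-(n+1)}$. Applying the hypothesis with $h = k_{n+1} > k = k_n > k_0$ then yields the recursion
$$
\phi(k_{n+1}) \le \frac{C}{(k_{n+1}-k_n)^l}\,\phi(k_n)^m = \frac{C\, 2^{(n+1)l}}{d^l}\,\phi(k_n)^m.
$$

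Next I would establish, by induction on $n$, the geometric decay estimate
$$
\phi(k_n) \le \phi(k_0)\, r^{-n}, \qquad r := 2^{\frac{l}{m-1}},
$$
noting that $r > 1$ precisely because $m > 1$. The base case $n = 0$ is immediate. For the inductive step, I would insert $\phi(k_n) \le \phi(k_0)\,r^{-n}$ into the recursion; after raising to the power $m$ and cancelling the common factor $2^{nl}$, the desired bound $\phi(k_{n+1}) \le \phi(k_0)\,r^{-(n+1)}$ reduces to the single requirement
$$
d^l \ge C\,\phi(k_0)^{m-1}\, 2^{\,l + \frac{l}{m-1}}.
$$
Since $l + \tfrac{l}{m-1} = \tfrac{lm}{m-1}$, this is exactly guaranteed by the prescribed choice $d^l = C\,\phi(k_0)^{m-1}\, 2^{\frac{lm}{m-1}}$, so the induction closes.

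Finally, since $r > 1$, letting $n \to \infty$ gives $\phi(k_n) \to 0$. Because $\phi$ is nonincreasing and $k_n \le k_0 + d$, monotonicity yields $0 \le \phi(k_0 + d) \le \phi(k_n)$ for every $n$, whence $\phi(k_0 + d) = 0$.

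I expect the only point requiring genuine care to be the selection of the decay ratio $r = 2^{l/(m-1)}$: it must be chosen so that the powers of $2$ coming from the shrinking gaps $k_{n+1} - k_n$ balance against the geometric decay raised to the power $m$ in the recursion. It is exactly this balance—available only when $m > 1$—that both forces $r > 1$ (ensuring $\phi(k_n) \to 0$) and pins down the threshold value of $d$ stated in the lemma. Everything else is routine bookkeeping of exponents.
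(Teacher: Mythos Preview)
The paper does not supply its own proof of this lemma; it is quoted as a standard tool from Kinderlehrer--Stampacchia (the reference \cite{Gstam}, Lemma~B.1). Your argument is exactly the classical dyadic iteration due to Stampacchia, and the exponent bookkeeping closing the induction is correct.

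One minor technicality worth recording: your first application of the recursion (passing from $n=0$ to $n=1$) invokes the hypothesis with $k=k_0$, whereas the statement as written demands $h>k>k_0$ strictly. This is harmless: since $\phi$ is nonincreasing and $m>0$, for any fixed $h>k_0$ and $k\in(k_0,h)$ one has $\phi(h)\le C(h-k)^{-l}\phi(k)^m\le C(h-k)^{-l}\phi(k_0)^m$, and letting $k\downarrow k_0$ extends the inequality to $k=k_0$. With that remark the argument is complete.
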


\textbf{Notation:} Throughout the rest of the article, we shall use the following notations.
\begin{itemize}
%\item $X:=W_0^{1,p}(\Omega,w)$.

\item For $u\in W_0^{1,p}(\Om,w)$, denote by $\|u\|$ to mean the norm $\|u\|_{W_0^{1,p}(\Om,w)}$ as defined by \eqref{equinorm}.

\item For given constants $c,d$ and a set $S$, by $c\leq u\leq d$ in $S$, we mean $c\leq u\leq d$ almost everywhere in $S$. Moreover, we write $|S|$ to denote the Lebesgue measure of $S$.

\item $\langle,\rangle$ denotes the standard inner product in $\mathbb{R}^N$.

\item The conjugate exponent of $\theta>1$ by $\theta':=\frac{\theta}{\theta-1}$.

\item We denote by $p^*:=\frac{Np}{N-p}$ if $1<p<N$, $p^*=\infty$ if $p\geq N$. Analogously, we denote by $p_{s}^*:=\frac{Np_{s}}{N-p_{s}}$ if $1\leq p_s<N$, $p_{s}^*=\infty$ if $p_s\geq N$.

\item For $a\in\mathbb{R}$, we denote by $a^+:=\max\{a,0\}$, $a^-:=\max\{-a,0\}$ and $a_-:=\min\{a,0\}$.

\item We write by $c,C$ or $C_i$ for $i\in\mathbb{N}$ to mean a constant which may vary from line to line or even in the same line. If a constant $C$ depends on $r_1,r_2,\ldots$, we denote it by $C(r_1,r_2,\ldots)$. 
\end{itemize}

\section{Proof of Theorem \ref{pu2thm} and Theorem \ref{pu2thm2}}
For the rest of the article, we assume that $1<p<\infty$, $s\in I$ and $w\in W_p^{s}$ unless otherwise mentioned. Throughout this section, we assume $g$ is of the form $(g_1)$. First we obtain some useful results. Consider the energy functional $J_\la: W_0^{1,p}(\Omega,w) \to \mb R \cup \{\pm\infty\}$ defined by
\[J_\la(u)=\int_{\Om}G(x,\nabla u)- \la \int_{\Om}H(u)~dx  \]
where
$$
G(x,\nabla u)=\frac{1}{p}w(x)F(\nabla u)^p
$$
and
\begin{equation*}
H(t)=\left\{
\begin{aligned}
&\int_0^t h(\tau)\tau^{-\gamma}~d\tau,\; \text{if}\; t>0,\\
&0, \; \text{if}\; t\leq 0.
\end{aligned}\right.
\end{equation*}
Following Haitao \cite{YHaitao}, we establish the following result in the general weighted anisotropic setting.
\begin{Lemma}\label{Subsuplemma}
Let $2\leq p<\infty$. Suppose that $\underline{u}, \overline{u} \in W_0^{1,p}(\Omega,w) \cap  L^\infty(\Om)$ be weak subsolution and supersolution of \eqref{maineqn} respectively such that $0<\underline{u} \leq \overline{u}$ in $\Omega$ and $\uline{u}\geq c(\omega)>0$ for every $\omega \Subset  \Om$, for some constant $c(\omega)$. Then there exists a weak solution $u\in W_0^{1,p}(\Omega,w) \cap L^\infty(\Om)$ of \eqref{maineqn} satisfying $\uline{u}\leq u\leq \oline{u}$ in $\Om$.
\end{Lemma}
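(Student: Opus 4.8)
The plan is to prove this via the standard monotone-iteration (sub/supersolution) method, adapted to the weighted anisotropic functional $J_\la$. Fix the ordered pair $\underline{u}\le\overline{u}$. The key idea is to truncate the singular nonlinearity between $\underline{u}$ and $\overline{u}$ so that the resulting functional is well defined, coercive, and weakly lower semicontinuous on $W_0^{1,p}(\Omega,w)$. Concretely, I would define a truncation operator $T(x,t):=\min\{\overline{u}(x),\max\{t,\underline{u}(x)\}\}$ and set $\widetilde{g}(x,t):=\la h(T(x,t))\,T(x,t)^{-\gamma}$, together with its primitive $\widetilde{H}(x,t):=\int_0^t \widetilde g(x,\tau)\,d\tau$. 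Since $\underline{u}\ge c(\omega)>0$ on each $\omega\Subset\Omega$ and $\overline{u}\in L^\infty(\Omega)$, on any such $\omega$ the quantity $\widetilde g(x,t)$ is bounded above and below by positive constants depending only on $\omega$; in particular $\widetilde H(x,t)$ grows at most linearly in $t$, so the modified functional
\[
\widetilde J_\la(u)=\int_\Omega \frac{1}{p}w(x)F(\nabla u)^p\,dx-\int_\Omega \widetilde H(x,u)\,dx
\]
is finite on all of $W_0^{1,p}(\Omega,w)$.

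Next I would show $\widetilde J_\la$ admits a global minimizer. Coercivity follows because the gradient term dominates: $\widetilde H(x,u)\le C|u|+C$ pointwise (the $x$-dependence of $C$ being controlled since we only ever evaluate $h$ on the bounded range $[\min_\Omega 0,\|\overline u\|_\infty]$ once we note $\widetilde g(x,t)$ is bounded for $t$ large), and by Lemma \ref{emb} the map $u\mapsto\int_\Omega|u|\,dx$ is controlled by $\|u\|$, so $\widetilde J_\la(u)\ge \frac{c_1}{p}\|u\|^p-C\|u\|-C\to\infty$. Weak lower semicontinuity of $u\mapsto\int_\Omega w F(\nabla u)^p$ is standard (convexity of $\xi\mapsto F(\xi)^p$ from (H4), plus Fatou after passing to an a.e.-convergent subsequence of gradients, or by the uniform convexity of the space noted after \eqref{equinorm}); the lower-order term is weakly continuous by the compact embedding $W_0^{1,p}(\Omega,w)\hookrightarrow L^t(\Omega)$ and the linear growth of $\widetilde H$. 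Hence a minimizer $u\in W_0^{1,p}(\Omega,w)$ exists, and $u\in L^\infty(\Omega)$ will follow from the next step since we will show $\underline u\le u\le\overline u$.

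The crucial step—and the main obstacle—is the comparison argument showing the minimizer $u$ satisfies $\underline{u}\le u\le\overline{u}$ a.e. in $\Omega$, because only then does $\widetilde g(x,u)=\la h(u)u^{-\gamma}=g(x,u)$ and $u$ becomes a genuine weak solution of \eqref{maineqn}. For the upper bound, I would test the (Euler–Lagrange) inequality for $u$ and the supersolution inequality for $\overline u$ with the admissible nonnegative test function $\phi=(u-\overline u)^+\in W_0^{1,p}(\Omega,w)$ (admissible by Remark \ref{tstrmk} and a density argument, using that $\phi$ vanishes where $u\le\overline u$ and that $\overline u\ge\underline u\ge c(\omega)$ keeps the singular term integrable on the set $\{u>\overline u\}$). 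Subtracting and using $\widetilde g(x,u)=\widetilde g(x,\overline u)$ on $\{u>\overline u\}$ (by definition of the truncation), the right-hand sides cancel, leaving
\[
\int_{\{u>\overline u\}} w(x)\big\langle F(\nabla u)^{p-1}\nabla_\xi F(\nabla u)-F(\nabla\overline u)^{p-1}\nabla_\xi F(\nabla\overline u),\,\nabla u-\nabla\overline u\big\rangle\,dx\le 0,
\]
and Lemma \ref{alg} forces $F(\nabla(u-\overline u)^+)=0$, hence $(u-\overline u)^+=0$ by the Poincaré-type inequality, i.e. $u\le\overline u$. The lower bound $u\ge\underline u$ is symmetric, testing with $(\underline u-u)^+$ and using the subsolution inequality. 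The delicate points here are the justification that these truncations are legitimate test functions in the weighted space (finiteness of all integrals, which relies on the $L^\infty$ bounds and the local positivity $c(\omega)$), and the fact that on $\omega\Subset\Omega$ one gets $u\ge\underline u\ge c(\omega)>0$, so $u$ meets the positivity requirement of Definition \ref{wksoldef}. Once $\underline u\le u\le\overline u$ is established, $u$ solves \eqref{maineqn} in the weak sense, lies in $W_0^{1,p}(\Omega,w)\cap L^\infty(\Omega)$, and the proof is complete. The hypothesis $p\ge2$ enters precisely through Lemma \ref{alg}; for $1<p<2$ one would instead invoke Lemma \ref{AI}, which is why Theorem \ref{pu2thm2} is stated separately for the non-anisotropic operators where that inequality applies directly.
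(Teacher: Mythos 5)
Your strategy (truncate the nonlinearity between $\underline{u}$ and $\overline{u}$, minimize the truncated functional over all of $W_0^{1,p}(\Omega,w)$, then recover $\underline{u}\le u\le\overline{u}$ by testing with $(u-\overline u)^+$ and $(\underline u-u)^+$ and invoking Lemma \ref{alg}) is a legitimate classical route, and it is genuinely different from the paper's: the paper minimizes the \emph{untruncated} functional $J_\la$ over the closed convex order interval $S=\{\underline u\le v\le\overline u\}$ and then shows the constrained minimizer solves the equation via Haitao's one-sided perturbation $\eta_\epsilon=u+\epsilon\phi-\phi^\epsilon+\phi_\epsilon$ and the estimates on $Q^\epsilon$, $Q_\epsilon$ — no comparison principle and no global growth estimate are ever needed, because on $S$ one has the uniform bound $H(v)\le\int_0^{\overline u}h(\tau)\tau^{-\gamma}\,d\tau\le\frac{h(\|\overline u\|_\infty)}{1-\gamma}\|\overline u\|_\infty^{1-\gamma}$ (finite precisely because $\gamma<1$).

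There is, however, a genuine gap in your version as written. Your coercivity estimate rests on the claim that $\widetilde g(x,t)$ is bounded for large $t$ and hence $\widetilde H(x,t)\le C|t|+C$ with a constant independent of $x$. This is false: for $t\ge\overline u(x)$ one has $\widetilde g(x,t)=\la h(\overline u(x))\,\overline u(x)^{-\gamma}$, and neither $\overline u$ nor $\underline u$ has a positive lower bound near $\partial\Omega$ (the hypothesis $\underline u\ge c(\omega)>0$ is only for $\omega\Subset\Omega$). The truncation removes the singularity in the $t$-variable but not the spatial singularity of $\overline u^{-\gamma}$ at the boundary, so $\int_\Omega\widetilde H(x,u)\,dx\le C+C\int_\Omega\overline u^{-\gamma}u^+\,dx$, and without knowing that $\phi\mapsto\int_\Omega\overline u^{-\gamma}\phi\,dx$ is controlled by $\|\phi\|$ you cannot conclude coercivity, nor finiteness and G\^ateaux differentiability of $\widetilde J_\la$, nor the admissibility of $(u-\overline u)^+$ as a test function (the same integral appears there on the set $\{u>\overline u\}$). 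This missing estimate does not follow from the stated hypotheses alone; it can be extracted from the supersolution inequality itself (test \eqref{wksoleqn} for $\overline u$ with nonnegative $\phi$, use $h(\overline u)\ge h(0)>0$ and \eqref{ubd} to get $\la h(0)\int_\Omega\overline u^{-\gamma}\phi\,dx\le c_2\|\overline u\|^{p-1}\|\phi\|$, then pass to nonnegative $\phi\in W_0^{1,p}(\Omega,w)$ by density and Fatou), but that step is essential and absent. Once it is supplied, the rest of your argument (Euler–Lagrange equation for $C_c^1$ test functions, extension to $W_0^{1,p}$ test functions, the cancellation $\widetilde g(x,u)=g(x,\overline u)$ on $\{u>\overline u\}$ and $\widetilde g(x,u)=g(x,\underline u)$ on $\{u<\underline u\}$, and Lemma \ref{alg} with Poincar\'e) does go through for $p\ge 2$.
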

\begin{proof}
Let us consider the set
$$
S=\{v\in W_0^{1,p}(\Omega,w):\uline{u}\leq v\leq \oline{u}\text{ in }\Om\}.
$$
Since $\uline{u}\leq \oline{u}$ in $\Om$, we have $S\neq \emptyset$. We observe that $S$ is closed and convex. The result is proved in the following two Steps. \\
\textbf{Step $1$:} We claim that $J_\la$ admits a minimizer $u$ over $S$.\\
To this end, we prove that $J_\la$ is weakly sequentially lower semicontinuous over $S$. Indeed, let $\{v_k\}_{k\in\mathbb{N}} \subset S$ be such that $v_k \rightharpoonup v$ weakly in $W_0^{1,p}(\Omega,w)$. Then by the hypothesis on $h$, we have
\[H(v_k) \leq \int_0^{\oline{u}} h(\tau)\tau^{-\gamma}~d\tau \leq \frac{h(\|\oline{u}\|_\infty)}{(1-\gamma)}\|\oline{u}\|_\infty^{1-\gamma}.\]
Therefore by the Lebesgue Dominated Convergence theorem and weak lower semicontinuity of norm, the claim follows. Hence, there exists a minimizer $u \in S$ of $J_\la$ that is $J_\la(u)= \inf\limits_{v\in S}J_\la(v)$.\\
\textbf{Step $2$:} Here, we prove that $u$ is a weak solution of \eqref{maineqn}.\\
Let $\phi \in C_c^{1}(\Om)$ and $\epsilon>0$. We define
\begin{equation*}
\eta_{\epsilon}=\left\{
\begin{aligned}
&\oline{u} \;\;\;\text{if} \; u+\epsilon \phi \geq \oline{u},\\
&u+\epsilon \phi  \;\;\;\text{if}\; \uline{u}\leq u+\epsilon \phi \leq \oline{u},\\
&\uline{u} \;\;\;\text{if} \; u+\epsilon \phi \leq \uline{u}.
\end{aligned}\right.
\end{equation*}
Observe that $\eta_{\epsilon}=u+\epsilon\phi-\phi^{\epsilon}+\phi_{\epsilon}\in S$, where $\phi^{\epsilon}= (u+\epsilon \phi -\oline{u})^+$ and $\phi_{\epsilon}= (u+\epsilon \phi -\uline{u})^-$. By Step $1$ above, since $u$ is a minimizer of $J_\la$, we have
\begin{equation}\label{eq1}
\begin{split}
0 & \leq \lim_{t \to 0} \frac{J_\la(u+t(\eta_{\epsilon}-u))- J_\la(u)}{t}\\
& = \lim_{t \to 0} \frac{\displaystyle\int_{\Omega}\{G(x,\nabla u+t\nabla(\eta_\epsilon -u))-G(x,\nabla u)\}~dx }{t} - \la\lim_{t \to 0} \frac{\displaystyle \int_\Om \{H(u+t(\eta_\epsilon -u))- H(u)\} ~dx}{t}\\
&= I-\la J \;\text{(say)}.
\end{split}
\end{equation}
 We observe that
 \[I = \int_\Om \nabla_{\xi}G(x,\nabla u)\nabla (\eta_\epsilon -u)~dx,\]
   \[J = \lim_{t\to 0} \int_\Om (\eta_\epsilon -u)(u+\theta t (\eta_\epsilon -u))^{-\gamma}h(u+\theta t (\eta_\epsilon -u))~dx,\; \text{for some}\; \theta \in(0,1).\]
If $(\eta_\epsilon -u)\geq 0$, Fatou's Lemma gives
\[J \geq \int_\Om {(\eta_\epsilon -u) u^{-\gamma}h(u)}~dx.\]
 On the otherhand, if $(\eta_\epsilon -u) <0$, then since $(\eta_\epsilon-u)\geq \epsilon\phi$, we get $\phi \leq 0$. So, in such case, we obtain
 \[\left| {(\eta_\epsilon -u){(u+\theta t (\eta_\epsilon -u))^{-\gamma}}h(u+\theta t (\eta_\epsilon -u))} \right|\leq {-(\eta_\epsilon -u){\uline{u}^{-\gamma}} h(||\oline{u}||_{\infty})}\leq {-\epsilon\phi {\uline{u}^{-\gamma}}h(||\oline{u}||_\infty)}\in L^1(\Om)\]
 since $\phi\in C_c^{1}(\Om)$ and $\uline{u}\geq c(\omega)>0,$ whenever $\omega\Subset\Om$.
By the Lebesgue Dominated Convergence theorem, we have
$$
J=\int_{\Om}{(\eta_\epsilon -u){u^{-\gamma}}h(u)}\,dx.
$$
Plugging the above estimates of $I$ and $J$ in \eqref{eq1}, we arrive at
\begin{equation}\label{eq2}
\begin{split}
&0\leq  \int_\Om \nabla_{\xi}G(x,\nabla u)\nabla (\eta_\epsilon -u)~dx - \la \int_\Om {(\eta_\epsilon -u){u^{-\gamma}}h(u)}~dx\\
& \implies \frac{1}{\epsilon}(Q^\epsilon -Q_\epsilon)\leq \int_\Om \nabla_{\xi}G(x,\nabla u)\nabla \phi~dx - \la \int_\Om {{u^{-\gamma}}h(u)}\phi~dx
\end{split}
\end{equation}
where
\[Q^\epsilon= \int_\Om  \nabla_{\xi}G(x,\nabla u)\nabla\phi^{\epsilon}~dx - \la \int_\Om {{u^{-\gamma}}h(u)}\phi^{\epsilon}~dx \]
\[\text{and}\;Q_\epsilon= \int_\Om  \nabla_{\xi}G(x,\nabla u) \nabla \phi_\epsilon~dx - \la \int_\Om {{u^{-\gamma}}h(u)}\phi_\epsilon~dx .\]
Now we estimate $Q^\epsilon $ and $Q_\epsilon$ separately. We have
\begin{equation}\label{queps}
\begin{split}
\frac{1}{\epsilon}Q^\epsilon &\geq \frac{1}{\epsilon} \int_\Om \big\{\nabla_{\xi}G(x,\nabla u)-\nabla{\xi}G(x,\nabla \oline{u})\}\nabla \phi^\epsilon~dx\\
&\quad+ \frac{\la}{\epsilon}\int_\Om \oline{u}^{-\gamma}{f(\oline{u})}\phi^\epsilon~dx-\frac{\la}{\epsilon}\int_\Om {u}^{-\gamma}{f({u})}\phi^\epsilon~dx\\
&=\frac{1}{\epsilon}\int_{\Om^{\epsilon}}\big\{\nabla_{\xi}G(x,\nabla u)-\nabla_{\xi}G(x,\nabla \oline{u})\}\nabla(u-\oline{u})\,dx\\
& \quad \quad +\int_{\Om^{\epsilon}}\big\{\nabla_{\xi}G(x,\nabla u)-\nabla_{\xi}G(x,\nabla \oline{u})\}\nabla\phi\,dx
+\frac{\la}{\epsilon}\int_{\Om}f(u)(\oline{u}^{-\gamma}-u^{-\gamma})\phi^{\epsilon}\,dx\\
&\geq \int_{\Om^{\epsilon}}\big\{\nabla_{\xi}G(x,\nabla u)-\nabla_{\xi}G(x,\nabla \oline{u})\}\nabla\phi\,dx+\frac{\la}{\epsilon}\int_{\Om^{\epsilon}}f(u)(\oline{u}^{-\gamma}-u^{-\gamma})(u-\oline{u})\,dx\\
&\quad \quad+\la\int_{\Om^{\epsilon}}f(u)(\oline{u}^{-\gamma}-u^{-\gamma})\phi\,dx\\
&\geq o(1)
\end{split}
\end{equation}
using Lemma \ref{alg}, $\oline{u}$ is a weak supersolution of \eqref{maineqn}, $u\leq\oline{u}$ and $\displaystyle\int_{\Om^{\epsilon}}f(u)(\oline{u}^{-\gamma}-u^{-\gamma})\phi\,dx\leq{2{c(\omega)^{-\gamma}}f(||\oline{u}||_{\infty})}||\phi||_{\infty}<+\infty$, where $\Om^{\epsilon}= \text{supp}\;\phi^{\epsilon}$ and $\omega=\mathrm{supp}\,\phi$. Next, we observe that
\begin{equation}\label{qleps}
\begin{split}
\frac{1}{\epsilon}Q_\epsilon &\leq -\frac{1}{\epsilon}\int_{\Om_{\epsilon}}\nabla_{\xi}G(x,\nabla u)\nabla(u+\epsilon \phi -\uline{u})~dx\\
&+\frac{1}{\epsilon}\int_{\Om_{\epsilon}}\nabla_{\xi}G(x,\nabla\uline{u})\nabla(u+\epsilon\phi-\uline{u})\,dx+\frac{\la}{\epsilon}\int_{\Om}\uline{u}^{-\gamma}{f(\uline{u})}\phi_{\epsilon}\,dx-\frac{\la}{\epsilon}\int_{\Om}u^{-\gamma}{f(u)}\phi_{\epsilon}\,dx\\
&\leq \int_{\Om_{\epsilon}}\big\{\nabla_{\xi}G(x,\nabla\uline{u})-\nabla_{\xi}G(x,\nabla{u})\big\}\nabla\phi\,dx-\frac{\la}{\epsilon}\int_{\Om_{\epsilon}}f(u)(\uline{u}^{-\gamma}-u^{-\gamma})(u-\uline{u})\,dx\\
& \quad \quad-\la\int_{\Om_{\epsilon}}f(u)(\uline{u}^{-\gamma}-u^{-\gamma})\phi\,dx\\
&\leq o(1)
\end{split}
\end{equation}
using Lemma \ref{alg}, $\uline{u}$ is a weak subsolution of \eqref{maineqn}, $u \geq \uline{u}$ and $\displaystyle \int_{\Om_{\epsilon}}f(u)(\uline{u}^{-\gamma}-u^{-\gamma})\phi\,dx\leq {2c(\omega)^{-\gamma} f(\|\oline{u}\|_\infty)}\|\phi\|_\infty<+\infty$, where $\Om_\epsilon=\mathrm{supp}\,\phi_\epsilon$ and $\omega=\mathrm{supp}\,\phi$.

Using the estimates \eqref{queps} and \eqref{qleps} in \eqref{eq2}, we deduce that
\[0 \leq \int_\Om  \nabla_{\xi}G(x,\nabla{u})\nabla \phi~dx - \la \int_\Om f(u)u^{-\gamma}\phi~dx.\]
Since $\phi\in C_c^{1}(\Om)$ is arbitrary, our claim follows. This completes the proof.
\end{proof}

\begin{Remark}\label{Subsuplemmarmk}
If $F_{p,w}=\Delta_{p,w}$ or $S_{p,w}$ given by \eqref{ex}, then noting Lemma \ref{AI}, with the exact proof, Lemma \ref{Subsuplemma} holds for any $1<p<\infty$.
\end{Remark}

%Noting Lemma \ref{regrmk}-\ref{alg}, we can apply \cite[Theorem $5$]{Gpadm} to obtain the existence of a weak solution $v_0\in W_0^{1,p}(\Om,w)$ of \eqref{maineqn} for $f=1$ in $\Om$. Next, we establish the boundedness of $v_0$, which is useful to prove Theorem \ref{pu2thm}. 

\begin{Lemma}\label{nlem}
Let $0<\gamma<1<p<\infty$ and $v_0\in W_0^{1,p}(\Om,w)$ be a weak solution of the problem 
\begin{equation}\label{neqn}
-F_{p,w}u=u^{-\gamma}\text{ in }\Om,\quad u>0\text{ in }\Om,\quad u=0\text{ on }\partial\Om.
\end{equation}
Then $v_0\in L^\infty(\Om)$.
\end{Lemma}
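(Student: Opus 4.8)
The plan is to establish the $L^\infty$ bound by a De Giorgi--Stampacchia truncation iteration built on Lemma \ref{useful for L infinity estimate}. The only subtlety is the singular right-hand side $v_0^{-\gamma}$, which may blow up near $\partial\Om$; the key observation is that since we only need an \emph{upper} bound for $v_0$, we only ever test against super-level sets $\{v_0>k\}$ with $k\ge 1$, and there $v_0^{-\gamma}\le 1$ automatically.

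First I would fix $k\ge 1$, set $A_k:=\{x\in\Om:v_0(x)>k\}$, and test the weak formulation of \eqref{neqn} with $\phi=(v_0-k)^+$. This is legitimate: nonnegative truncations of nonnegative functions of $W_0^{1,p}(\Om,w)$ remain in $W_0^{1,p}(\Om,w)$ (see \cite{Juh}), by Remark \ref{tstrmk} the weak identity extends to such $\phi$, and both sides are finite for this choice since on $A_k$ one has $v_0^{-\gamma}(v_0-k)^+\le (v_0-k)^+\le v_0\in L^1(\Om)$. Using \eqref{lbd} of Lemma \ref{regrmk} on the left (and $\nabla(v_0-k)^+=\chi_{A_k}\nabla v_0$) and $v_0^{-\gamma}\le 1$ on $A_k$ on the right, I would get
\[
\|(v_0-k)^+\|^p=\int_\Om w\,F(\nabla(v_0-k)^+)^p\,dx\le\int_{A_k}(v_0-k)\,dx,\qquad k\ge 1,
\]
where $\|\cdot\|$ is the norm from \eqref{equinorm}.

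Next I would invoke the embedding Lemma \ref{emb}. If $p_s>N$ there is nothing to prove, since $W_0^{1,p}(\Om,w)\hookrightarrow C(\overline\Om)$. Otherwise set $t:=p_s^*$ when $p_s<N$ and fix any $t\in(p,\infty)$ when $p_s=N$; in both cases Lemma \ref{emb} gives $\|\psi\|_{L^t(\Om)}\le C\|\psi\|$ for $\psi\in W_0^{1,p}(\Om,w)$. Applying H\"older's inequality with exponents $t$ and $t/(t-1)$ to $\int_{A_k}(v_0-k)\,dx$ and combining with the previous estimate yields $\|(v_0-k)^+\|^{p-1}\le C|A_k|^{1-1/t}$, hence $\|(v_0-k)^+\|_{L^t(\Om)}\le C|A_k|^{\frac{1}{p-1}(1-1/t)}$. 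Since $(v_0-k)^+\ge h-k$ on $A_h$ for $h>k$, this produces the Stampacchia-type recursion
\[
|A_h|\le\frac{C}{(h-k)^{t}}\,|A_k|^{m},\qquad m:=\frac{t-1}{p-1},\qquad h>k\ge 1.
\]
Finally I would apply Lemma \ref{useful for L infinity estimate} to $\phi(k):=|A_k|$ (nonincreasing, with $\phi(1)\le|\Om|<\infty$), with $k_0=1$, $l=t$, and $m$ as above, concluding $|A_{1+d}|=0$ for a finite $d$, i.e. $v_0\le 1+d$ a.e.\ in $\Om$, so $v_0\in L^\infty(\Om)$.

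I expect the delicate points to be two. First, the one already flagged: a naive estimate of $\int_\Om v_0^{-\gamma}(v_0-k)^+\,dx$ is obstructed by the blow-up of $v_0^{-\gamma}$ near $\partial\Om$, and the remedy is to restrict to truncation heights $k\ge 1$. Second, applying Lemma \ref{useful for L infinity estimate} requires $m>1$, i.e.\ $t>p$; for $p_s=N$ this is our choice of $t$, while for $p_s<N$ the inequality $p_s^*>p$ is a short computation equivalent to $s>N/p$, which is exactly where the hypothesis $s\in I$ enters (recall $I\subset(N/p,\infty)$). Everything else is routine.
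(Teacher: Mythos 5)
Your proposal is correct and follows essentially the same route as the paper: test with $(v_0-k)^+$ for truncation levels above $1$ so that $v_0^{-\gamma}\le 1$ on the super-level set, use the embedding of Lemma \ref{emb} into $L^q(\Om)$ for some $q>p$ together with H\"older's inequality to obtain the Stampacchia recursion $|A(h)|\le C(h-k)^{-q}|A(k)|^{\frac{q-1}{p-1}}$, and conclude via Lemma \ref{useful for L infinity estimate}. The only cosmetic differences are that the paper absorbs the gradient term by Young's inequality rather than dividing by $\|(v_0-k)^+\|$, and leaves implicit the verification (which you spell out) that $q>p$ is available precisely because $s>N/p$.
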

\begin{proof}
Let $k>1$, then by Remark \ref{tstrmk} choosing $\phi_k = (v_0-k)^+ \in W_0^{1,p}(\Om,w)$ as a test function in the equation (\ref{neqn}) and using H$\ddot{\text{o}}$lder's and Young's inequality with $\epsilon\in(0,1)$, we arrive at
\begin{align*}
\int_{\Omega}wF(\nabla\phi_k)^p\,dx\leq\,C(\epsilon)|A(k)|^\frac{p'}{q^{'}} + \epsilon\int_{\Omega}wF(\nabla\phi_k)^p\,dx,
\end{align*}
where $A(k)=\big\{x\in\Omega:v_0\geq k \text{ in }\Omega\big\}$. In the above estimate, we have also that $W_0^{1,p}(\Om,w)\to L^q(\Om)$ for some $q>p$ from Lemma \ref{emb}. Therefore, fixing $\epsilon\in(0,1)$, we obtain
$$
\int_{\Omega}wF(\nabla\phi_k)^p\,dx \leq C|A(k)|^\frac{p'}{q^{'}},
$$
where $C$ is some positive constant. Let $1 < k < h$, then since $A(h)\subset A(k)$, we have
\begin{align*}
(h-k)^p|A(h)|^\frac{p}{q}
\leq\Big(\int_{A(h)}(v_0-k)^{q}\,dx\Big)^\frac{p}{q}&\leq\Big(\int_{A(k)}(v_0-k)^{q}\,dx\Big)^\frac{p}{q}\\
&\leq C\int_{\Omega}wF(\nabla\phi_k)^p\,dx\leq C\,|A(k)|^\frac{p'}{q^{'}}.
\end{align*}
Therefore
$$
|A(h)| \leq\frac{C}{(h-k)^q}|A(k)|^\frac{q-1}{p-1}.
$$
Since $\frac{q-1}{p-1}>1$, by Lemma \ref{useful for L infinity estimate}, we have  
$
||v_0||_{L^\infty(\Omega)} \leq c,
$
where $c$ is a positive constant. Hence the result follows.
\end{proof}

\begin{Remark}\label{exrmk}
Noting Lemma \ref{regrmk} and Lemma \ref{alg}, we can apply \cite[Theorem 2.5]{Gpadm} to obtain the existence of $v_0\in W_0^{1,p}(\Om,w)$ solving the problem \eqref{neqn}.
\end{Remark}

\textbf{Proof of Theorem \ref{pu2thm}:} We construct a pair of weak subsolution and supersolution of \eqref{maineqn} according to Lemma \ref{Subsuplemma}. By Lemma \ref{evpthm}, there exists $e_1\in W_0^{1,p}(\Omega,w)\cap L^\infty(\Omega)$ such that
\begin{equation}\label{aevp}
-F_{p,w}e_1=\la_1 w e_1^{p-1} \; \text{in}\; \Om, \;\;e_1>0\text{ in }\Om,\;\; e_1=0\;\text{on}\; \partial \Om
\end{equation}
and for every $\omega\Subset\Omega$, there exists a positive constant $c(\omega)$ with $u\geq c(\omega)$ in $\omega$. {By $(h_2)$, we know that $\lim\limits_{t\to 0} t^{-\gamma}{h(t)}=\infty$, so we can choose $a_\la>0$ sufficiently small such that
\begin{equation}\label{ala}
\la_1 (a_\la e_1)^{p-1} \leq \la {(a_\la e_1)^{-\gamma}}{h(a_\la e_1)}.
\end{equation}
Let $\uline{u}=a_{\la}e_1$, then $\uline{u}\in W_0^{1,p}(\Omega,w)\cap L^\infty(\Omega)$ and by \eqref{aevp} and \eqref{ala}, we get
\begin{equation}\label{sub}
-F_{p,w}{\uline{u}} \leq \la {(a_\la e_1)^{-\gamma}}{h(a_\la e_1)}=\la {\uline{u}^{-\gamma}}{h(\uline{u})}\;\text{in}\; \Om.
\end{equation}
By Lemma \ref{nlem} and Remark \ref{exrmk}, there exists $v_0\in W_0^{1,p}(\Omega,w)\cap L^\infty(\Om)$ such that for every $\omega\Subset\Omega$ there exists a positive constant $c(\omega)$ satisfying $v_0\geq c(\omega)>0$ in $\omega$ and
\begin{equation}\label{fs}
-F_{p,w}v_0 = v_0^{-\gamma},\; v_0>0 \; \text{in}\; \Om,\; v_0=0\;\text{on}\; \partial\Om.
\end{equation}
By the hypothesis $(h_2)$, since $\lim\limits_{t\to \infty}t^{-(\gamma+p-1)}{h(t)}=0$, we choose $b_\la>0$ sufficiently large such that
\begin{equation}\label{bla}
{(b_\la \|v_0\|_\infty)^{-(\gamma+p-1)}}{h({b_\la \|v_0\|_\infty)}}\leq \frac{1}{\la \|v_0\|^{\gamma+p-1}_\infty}.
\end{equation}
We define $\oline{u}:= b_\la v_0$. Then $\oline{u}\in W_0^{1,p}(\Omega,w)\cap L^\infty(\Omega)$ and using \eqref{fs} and \eqref{bla}, we have
\begin{equation}\label{sup}
-F_{p,w}{\oline{u}} = v_0^{-\gamma}{b_\la^{p-1}}  \geq \la{(b_\la v_0)^{-\gamma}}{h({b_\la \|v_0\|_\infty)}} \geq \la{\oline{u}^{-\gamma}} {h(\oline{u})}\; \text{in}\; \Om,
\end{equation}
where we have also used the nondecreasing property of $h$ from $(h_1)$.
Thus, from \eqref{sub} and \eqref{sup}, it follows that $\uline{u}$ and $\oline{u}$ are weak subsolution and supersolution of \eqref{maineqn} respectively and the constants $a_\la, b_\la$ can be chosen in such a way that $\uline{u}\leq \oline{u}$. Therefore, by Lemma \ref{Subsuplemma}, the result follows.

\textbf{Proof of Theorem \ref{pu2thm2}:} Noting Lemma \ref{AI} along with Remark \ref{Subsuplemmarmk}, Remark \ref{evpthmrmk} and then proceeding along the lines of the proof of Theorem \ref{pu2thm}, the result follows.

\section{Proof of Theorem \ref{mthm} and Theorem \ref{mthm2}}
In this section, we consider the equation \eqref{maineqn} when $g$ is of the form $(g_2)$, which reads as
\begin{equation}\label{psp}
\begin{split}
-F_{p,w}u&=\lambda u^{-\gamma}+u^q\text{ in }\Omega,\quad u>0\text{ in }\Omega,\quad u=0\text{ on }\partial\Omega,
\end{split}
\end{equation}
where $\lambda>0$, $1<p<\infty$, $0<\gamma<1$, $w\in W_p^{s}$ for some $s\in I$ and $q\in(p-1,p_s^{*}-1)$. Here $p_s^{*}=\frac{Np_{s}}{N-p_{s}}$ if $1\leq p_s<N$, $p_s^{*}=\infty$ if $p_s\geq N$. 

First, we obtain some preliminary results. To this end, we define the energy functional $I_\la: W_0^{1,p}(\Omega,w)\to\mb R\cup\{\pm \infty\}$ corresponding to the problem \eqref{psp} by
\begin{equation}\label{fnl1}
I_\la(u) :=\frac{1}{p}\int_{\Omega}w(x)F(\nabla u)^p\,dx -\la \int_\Om \frac{(u^+)^{1-\gamma}}{1-\gamma}~dx -\frac{1}{q+1}\int_\Om (u^+)^{q+1}~dx.
\end{equation}
For $\epsilon>0$, we consider the approximated problem
\begin{equation}\label{apx}
\begin{aligned}
  -F_{p,w}u &=\la(u^{+} +\epsilon)^{-\gamma}+ (u^+)^q\;\text{in}\; \Om,\quad u=0 \; \text{ on }\; \partial\Om.
\end{aligned}
\end{equation}
We notice that the energy functional associated with the problem \eqref{apx} is given by
\begin{equation}\label{fnl2}
I_{\la,\epsilon}(u) = \frac{1}{p}\int_{\Omega}w(x)F(\nabla u)^p\,dx -\la\int_\Om \frac{[(u^+ +\epsilon)^{1-\gamma}-\epsilon^{1-\gamma}]}{1-\gamma}~dx -\frac{1}{q+1}\int_\Om (u^+)^{q+1}~dx.
\end{equation}
We observe that $I_{\la,\epsilon}\in C^1\big(W_0^{1,p}(\Omega,w),\mb R\big)$, $I_{\la,\epsilon}(0)=0$ and $I_{\la,\epsilon}(v)\leq I_{0,\epsilon}(v)$, for all $ v \in W_0^{1,p}(\Omega,w)$. Let us define
\begin{equation}\label{l}
l=
\begin{cases}
p_s^{*}=\frac{Np_s}{N- p_s},\text{ if } 1\leq p_s<N,\\
r,\text{ if }p_s\geq N,
\end{cases}
\end{equation}
where $r>1$ is such that $p-1<q<r-1$ if $p_s\geq N$. Next we prove that $I_{\la,\epsilon}$ satisfies the Mountain Pass Geometry.
\begin{Lemma}\label{MP-geo}
Let $2\leq p<\infty$. Then there exists $R>0,\,\rho>0$ and $\Lambda>0$ depending on $R$ such that
$$
\inf\limits_{\|v\|\leq R}I_{\la,\epsilon}(v)<0\;\text{and}\;
\inf\limits_{\|v\|=R}I_{\la,\epsilon}(v)\geq \rho,\text{ for }\la\in(0,\Lambda).
$$
Moreover, there exists $T>R$ such that
$
I_{\la,\epsilon}(Te_{1})<-1$ for $\la\in (0,\La)$, where $e_1$ is given by Lemma \ref{evpthm}.
\end{Lemma}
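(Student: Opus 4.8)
The plan is to verify the three assertions in order, treating $\epsilon$ as a fixed parameter and tracking that $R,\rho$ do not depend on $\epsilon$ while the threshold $\La$ depends on $R$ (hence ultimately only on the data). First I would prove the local minimum near $0$ dips below $0$: fix the eigenfunction $e_1\geq 0$ from Lemma \ref{evpthm} and compute $I_{\la,\epsilon}(t e_1)$ for small $t>0$. The crucial point is that the singular term contributes
$-\la\int_\Om\frac{(te_1+\epsilon)^{1-\gamma}-\epsilon^{1-\gamma}}{1-\gamma}\,dx$,
which for small $t$ behaves like $-\la t\,\epsilon^{-\gamma}\int_\Om e_1\,dx+o(t)$ (by the mean value theorem applied to $\tau\mapsto\tau^{1-\gamma}$, uniformly in $x$ since $\epsilon>0$), so the negative linear-in-$t$ term dominates the $\frac{t^p}{p}\|e_1\|^p$ term as $p\geq 2>1$; hence $I_{\la,\epsilon}(te_1)<0$ for all small $t>0$, and since such $te_1$ lies in $\{\|v\|\le R\}$ for any prescribed $R>0$, we get $\inf_{\|v\|\le R}I_{\la,\epsilon}(v)<0$ unconditionally in $\la$.

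Next I would establish the barrier $\inf_{\|v\|=R}I_{\la,\epsilon}\ge\rho$. Using $(u^+)\le |u|$, $\epsilon^{1-\gamma}\ge 0$ and the elementary inequality $(a+\epsilon)^{1-\gamma}-\epsilon^{1-\gamma}\le a^{1-\gamma}$ for $a\ge 0$ (concavity of $\tau\mapsto\tau^{1-\gamma}$), bound
$I_{\la,\epsilon}(v)\ge \frac1p\|v\|^p-\frac{\la}{1-\gamma}\int_\Om |v|^{1-\gamma}\,dx-\frac1{q+1}\int_\Om|v|^{q+1}\,dx.$
Now invoke the embedding of Lemma \ref{emb}: $W_0^{1,p}(\Om,w)\hookrightarrow L^t(\Om)$ continuously for $t=1$ and for $t=q+1\le l$ (this uses $q<p_s^*-1$, resp. the choice of $r$ when $p_s\ge N$), giving $\int_\Om|v|^{1-\gamma}\le C|\Om|^{\gamma}\|v\|^{1-\gamma}\le C\|v\|^{1-\gamma}$ (Hölder) and $\int_\Om|v|^{q+1}\le C\|v\|^{q+1}$. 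Hence on $\|v\|=R$,
$I_{\la,\epsilon}(v)\ge \frac1p R^p-C_1 R^{q+1}-\la C_2 R^{1-\gamma}.$
Since $1-\gamma<1<p<q+1$, first choose $R>0$ small enough that $\frac1p R^p-C_1R^{q+1}\ge \frac12 \cdot\frac1p R^p=:2\rho>0$, then choose $\La>0$ (depending on this $R$) so that $\la C_2 R^{1-\gamma}\le \rho$ for all $\la\in(0,\La)$; this yields $I_{\la,\epsilon}(v)\ge\rho$ on $\|v\|=R$.

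Finally, for the far point I would use $I_{\la,\epsilon}(v)\le I_{0,\epsilon}(v)\le \frac1p\|v\|^p-\frac1{q+1}\int_\Om(v^+)^{q+1}\,dx$ and evaluate along $v=Te_1$ with $e_1\ge 0$, $e_1\not\equiv 0$: $I_{\la,\epsilon}(Te_1)\le \frac{T^p}{p}\|e_1\|^p-\frac{T^{q+1}}{q+1}\int_\Om e_1^{q+1}\,dx\to-\infty$ as $T\to\infty$ because $q+1>p$. Thus there is $T>R$ with $I_{\la,\epsilon}(Te_1)<-1$, uniformly for $\la\in(0,\La)$ (indeed for all $\la\ge 0$). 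The main obstacle is purely bookkeeping: making sure the constants $R,\rho,T$ are genuinely independent of $\epsilon$ — which works because every estimate above either discards the $\epsilon$-dependent singular term by an $\epsilon$-free upper bound or, in the local-minimum step, only needs $\epsilon$ fixed and positive without quantitative control — and confirming the embedding exponent $q+1$ is admissible, i.e. $q+1\le p_s^*$ when $1\le p_s<N$ and $q+1\le r$ otherwise, which is exactly the hypothesis $q\in(p-1,p_s^*-1)$ together with the definition \eqref{l}.
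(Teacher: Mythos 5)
Your proposal is correct and follows essentially the same route as the paper: the derivative computation $\lim_{t\to 0}t^{-1}I_{\la,\epsilon}(te_1)=-\la\epsilon^{-\gamma}\int_\Om e_1\,dx<0$ for the negative infimum, the bound $(v^++\epsilon)^{1-\gamma}-\epsilon^{1-\gamma}\le (v^+)^{1-\gamma}$ together with the embedding of Lemma \ref{emb} to choose $R$ small and then $\Lambda$ proportional to $\rho$ divided by the $\|v\|^{1-\gamma}$-term, and $I_{\la,\epsilon}(Te_1)\le I_{0,\epsilon}(Te_1)\to-\infty$ since $q+1>p$. The only cosmetic difference is that the paper fixes $R=k\bigl(\tfrac{q+1}{pC\theta}\bigr)^{1/(q+1-p)}$ explicitly and defines $\Lambda$ via a supremum over the sphere $\|v\|=R$, whereas you bound that supremum by $C_2R^{1-\gamma}$; the logic is identical.
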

\begin{proof}
Recalling the definition of $l$ from \eqref{l}, we define $\theta=|\Om|^{\frac{1}{\left(\frac{l}{q+1}\right)'}}$. By H\"{o}lder's inequality and Lemma \ref{emb}, for every $v\in W_0^{1,p}(\Omega,w)$, we have
\begin{equation}\label{MP1}
\int_\Om (v^+)^{q+1}~dx \leq \left( \int_\Om |v|^{l}\right)^{\frac{q+1}{l}} |\Om|^{\frac{1}{(\frac{l}{q+1})'}}\leq C\theta\|v\|^{q+1},
\end{equation}
for some positive constant $C$ independent of $v$. Since
$$
\lim_{t\to 0}\frac{I_{\la,\epsilon}(te_1)}{t}=-\la\int_{\Omega}\epsilon^{-\gamma}e_{1}\,dx<0,
$$
we choose $k\in(0,1)$ sufficiently small and set $\|v\|=R :=k(\frac{q+1}{pC\theta})^\frac{1}{q+1-p}$ such that 
$$
\inf\limits_{\|v\|\leq R}I_{\la,\epsilon}(v)<0.
$$
Moreover, using the fact $R<(\frac{q+1}{pC\theta})^\frac{1}{q+1-p}$ and the estimate \eqref{MP1}, we have
\begin{equation}\label{up}
I_{0,\epsilon}(v)\geq \frac{R^p}{p}-\frac{C\theta R^{q+1}}{q+1}
:=
2\rho\,(\text{say})>0.
\end{equation}
We define $$\Lambda:=\frac{\rho}{\sup\limits_{\|v\|=R} \left(\displaystyle\frac{1}{1-\gamma}\int_\Om |v|^{1-\gamma}~dx \right)},$$
which is positive. Note that, since $\rho,R$ depends on $k,q,p,|\Omega|$ and $C$, so does $\Lambda$. We observe that
\begin{equation}\label{knownfact}
(v^{+}+\epsilon)^{1-\gamma}-\epsilon^{1-\gamma}\leq (v^+)^{1-\gamma}.
\end{equation}
Therefore, we have
\begin{align*}
I_{\lambda,\epsilon}(v)&\geq \frac{1}{p}\int_{\Om}w(x)F(\nabla v)^p\,dx-\frac{1}{q+1}\int_{\Om}(v^{+})^{q+1}\,dx-\frac{\la}{1-\gamma}\int_{\Om}(v^{+})^{1-\gamma}\,dx\\
&= I_{0,\epsilon}(v)-\frac{\la}{1-\gamma}\int_{\Om}(v^{+})^{1-\gamma}\,dx.
\end{align*}
Hence, using \eqref{up}, for $\la\in(0,\Lambda)$, we get
\begin{align*}
\inf\limits_{\|v\|=R} I_{\la,\epsilon}(v)&\geq\inf\limits_{\|v\|=R}I_{0,\epsilon}(v)-\la \sup\limits_{\|v\|=R} \left(\frac{1}{1-\gamma}\int_\Om |v|^{1-\gamma}~dx \right)\\
&\geq 2\rho -\la \sup\limits_{\|v\|=R} \left(\frac{1}{1-\gamma}\int_\Om |v|^{1-\gamma}~dx \right)\geq \rho.
\end{align*}
Finally, we observe that $I_{0,\epsilon}(te_1) \to -\infty$, as $t\to +\infty$. This gives the existence of $T>R$ such that $I_{0,\epsilon}(Te_1)<-1$. Therefore, 
\[I_{\la,\epsilon}(Te_1)\leq I_{0,\epsilon}(Te_1)<-1,\]
which completes the proof.
\end{proof}

%\begin{Remark}\label{MPrmk}
%\noi As a consequence of Lemma \ref{MP-geo}, we have
%\begin{equation}\label{mpap}
%\inf\limits_{\|v\|=R}I_{\la,\epsilon}(v) \geq \rho\,\max\{I_{\la,\epsilon}(Te_1), I_{\la,\epsilon}(0)\} = 0.
%\end{equation}
%\end{Remark}

Next, we prove that $I_{\la,\epsilon}$ satisfies the Palais Smale $(PS)_c$ condition.

\begin{Lemma}\label{PS-cond}
Let $2\leq p<\infty$. Then $I_{\la,\epsilon}$ satisfies the $(PS)_c$ condition, for any $c \in \mb R$, that is if $\{u_k\}_{k\in\mathbb{N}}\subset W_0^{1,p}(\Omega,w)$ is a sequence such that
\begin{equation}\label{PS1}
I_{\la,\epsilon}(u_k)\to c \; \text{and}\; I_{\la,\epsilon}^\prime(u_k) \to 0
\end{equation}
as $k \to \infty$, then $\{u_k\}_{k\in\mathbb{N}}$ contains a strongly convergent subsequence in $W_0^{1,p}(\Omega,w)$.
\end{Lemma}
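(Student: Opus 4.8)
The plan is to follow the classical two-step scheme: first show the $(PS)_c$ sequence is bounded in $W_0^{1,p}(\Omega,w)$, then upgrade weak convergence to strong convergence using the monotonicity inequality from Lemma \ref{alg}. For boundedness, I would test the relation $I_{\la,\epsilon}'(u_k)\to 0$ against $u_k$ itself (noting $I_{\la,\epsilon}\in C^1$ so this is legitimate) and combine with $I_{\la,\epsilon}(u_k)\to c$. Concretely, forming the combination $I_{\la,\epsilon}(u_k)-\frac{1}{q+1}\langle I_{\la,\epsilon}'(u_k),u_k\rangle$ kills the top-order $(u_k^+)^{q+1}$ term and leaves $\big(\frac{1}{p}-\frac{1}{q+1}\big)\|u_k\|^p$ plus lower-order singular terms of the form $\int (u_k^+ + \epsilon)^{-\gamma}u_k\,dx$ and $\int[(u_k^+ + \epsilon)^{1-\gamma}-\epsilon^{1-\gamma}]\,dx$. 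Since $0<\gamma<1$ and $q>p-1$, the coefficient $\frac{1}{p}-\frac{1}{q+1}>0$, and the singular terms are controlled by $C(1+\|u_k\|^{1-\gamma})$ via \eqref{knownfact}, the embedding Lemma \ref{emb}, and the fact that $\epsilon>0$ is fixed (so $(u_k^++\epsilon)^{-\gamma}\le \epsilon^{-\gamma}$ gives an even cruder bound $\int(u_k^++\epsilon)^{-\gamma}u_k\,dx \le \epsilon^{-\gamma}\|u_k\|_{L^1}\le C\|u_k\|$). Since $1-\gamma<1<p$, a Young/absorption argument yields $\|u_k\|\le C$.

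Having extracted a subsequence with $u_k\rightharpoonup u$ weakly in $W_0^{1,p}(\Omega,w)$, by Lemma \ref{emb} we get $u_k\to u$ strongly in $L^t(\Omega)$ for the relevant subcritical exponents (in particular for $t=q+1<p_s^*$ when $p_s<N$, and for $t=1$) and $u_k\to u$ a.e. in $\Omega$. Next I would test $I_{\la,\epsilon}'(u_k)\to 0$ against $u_k-u\in W_0^{1,p}(\Omega,w)$, which gives
\begin{equation*}
\int_\Omega w(x)F(\nabla u_k)^{p-1}\nabla_\xi F(\nabla u_k)\cdot\nabla(u_k-u)\,dx = o(1) + \la\int_\Omega (u_k^+ + \epsilon)^{-\gamma}(u_k-u)\,dx + \int_\Omega (u_k^+)^q(u_k-u)\,dx.
\end{equation*}
The two right-hand integrals tend to zero: the first because $(u_k^+ + \epsilon)^{-\gamma}\le \epsilon^{-\gamma}$ is uniformly bounded and $u_k-u\to 0$ in $L^1(\Omega)$; the second because $(u_k^+)^q$ is bounded in $L^{(q+1)/q}(\Omega)$ (by boundedness in $L^{q+1}$) while $u_k-u\to 0$ in $L^{q+1}(\Omega)$, so Hölder closes it. Thus $\int_\Omega w F(\nabla u_k)^{p-1}\nabla_\xi F(\nabla u_k)\cdot\nabla(u_k-u)\,dx\to 0$. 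Since $u_k\rightharpoonup u$ weakly, also $\int_\Omega w F(\nabla u)^{p-1}\nabla_\xi F(\nabla u)\cdot\nabla(u_k-u)\,dx\to 0$, so subtracting and invoking the anisotropic monotonicity inequality \eqref{algineq} of Lemma \ref{alg} gives
\begin{equation*}
c\int_\Omega w(x)F(\nabla u_k-\nabla u)^p\,dx \le \int_\Omega w(x)\big\langle F(\nabla u_k)^{p-1}\nabla_\xi F(\nabla u_k)-F(\nabla u)^{p-1}\nabla_\xi F(\nabla u),\nabla u_k-\nabla u\big\rangle\,dx \to 0,
\end{equation*}
whence $\|u_k-u\|\to 0$, i.e. $u_k\to u$ strongly in $W_0^{1,p}(\Omega,w)$.

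I expect the main obstacle to be the bookkeeping around the singular term $(u_k^+ + \epsilon)^{-\gamma}$: one must be a little careful that the $\epsilon$-regularization genuinely removes the singularity (it does, since $\epsilon>0$ is fixed throughout this lemma), so that all estimates involving it are uniform in $k$ — this is exactly why the approximated problem \eqref{apx} was introduced rather than working with \eqref{psp} directly. A secondary technical point is justifying that $u_k-u$ is an admissible test function and that the weak-convergence term $\int_\Omega wF(\nabla u)^{p-1}\nabla_\xi F(\nabla u)\cdot\nabla(u_k-u)\,dx\to 0$; this follows because $\nabla(u_k-u)\rightharpoonup 0$ weakly in $L^p(\Omega,w)^N$ and $wF(\nabla u)^{p-1}\nabla_\xi F(\nabla u)\in L^{p'}(\Omega,w)^N$ by the growth bound \eqref{ubd} of Lemma \ref{regrmk} together with $u\in W_0^{1,p}(\Omega,w)$. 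The case $p_s\ge N$ is handled identically, using the exponent $r$ from \eqref{l} in place of $p_s^*$.
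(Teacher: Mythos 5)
Your proposal is correct and follows essentially the same route as the paper: boundedness via the combination $I_{\la,\epsilon}(u_k)-\tfrac{1}{q+1}\langle I_{\la,\epsilon}'(u_k),u_k\rangle$, then weak convergence, vanishing of the monotonicity pairing, and Lemma \ref{alg} to upgrade to strong convergence. The only (immaterial) difference is that you test against $u_k-u$ and close the nonlinear terms with H\"older plus compact embedding, where the paper tests separately against $u_k$ and $u_0$ and invokes Vitali's convergence theorem.
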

\begin{proof}
We prove the result in two steps below.\\
\textbf{Step $1$.} First, we claim that if $\{u_k\}_{k\in\mathbb{N}} \subset W_0^{1,p}(\Omega,w)$ satisfies \eqref{PS1} then $\{u_k\}_{k\in\mathbb{N}}$ is uniformly bounded in $W_0^{1,p}(\Omega,w)$. To this end, by \eqref{knownfact}, for some positive constant $C$ (independent of $k$), we have
\begin{equation}\label{PS2}
\begin{split}
&I_{\la,\epsilon}(u_k)- \frac{1}{q+1}I_{\la,\epsilon}^\prime(u_k)u_k = \left( \frac{1}{p}-\frac{1}{q+1}\right)
\int_{\Omega}w(x)F(\nabla u_k)^p\,dx -{\la}\int_\Om \frac{(u_k^+ +\epsilon)^{1-\gamma}-\epsilon^{1-\gamma}}{1-\gamma}~dx\\
& \quad +\frac{\la}{q+1}\int_\Om (u_k^+ +\epsilon)^{-\gamma}u_k~dx\\
& \geq \left( \frac{1}{p}-\frac{1}{q+1}\right)\|u_k\|^p-C\|u_k\|^{1-\gamma},
\end{split}
\end{equation}
for some positive constant $C$ (independent of $k$), where we have also used Lemma \ref{emb} and H\"older's inequality. 
Noting $q+1>p$ and using \eqref{PS2}, we obtain
\begin{equation}\label{PS2-new1}
I_{\la,\epsilon}(u_k)- \frac{1}{q+1}I_{\la,\epsilon}^\prime(u_k)u_k \geq C_1\|u_k\|^p -C\|u_k\|^{1-\gamma},
\end{equation}
for some positive constants $C,C_1$ (independent of $k$). Using \eqref{PS1}, for $k$ large enough, we have
\begin{equation}\label{PS3}
\left| I_{\la,\epsilon}(u_k)- \frac{1}{q+1}I_{\la,\epsilon}^\prime(u_k)u_k\right| \leq C+o(\|u_k\|),
\end{equation}
for some positive constant $C$ (independent of $k$). Since $p>1$, combining \eqref{PS2-new1} and \eqref{PS3}, our claim follows. \\
\textbf{Step $2$.} We claim that upto a subsequence, $u_k \to u_0$ strongly in $W_0^{1,p}(\Omega,w)$ as $k \to \infty$.\\
By Step $1$, since $\{u_k\}_{k\in\mathbb{N}}$ is uniformly bounded in $W_0^{1,p}(\Omega,w)$, due to the reflexivity of $W_0^{1,p}(\Omega,w)$, there exists $u_0\in W_0^{1,p}(\Omega,w)$ such that upto a subsequence, $u_k \rightharpoonup u_0$ weakly in $W_0^{1,p}(\Omega,w)$ as $k \to \infty$. Again, by \eqref{PS1}, we have
\[\lim_{k\to \infty}\left(\mc \int_{\Om}w(x)F(\nabla u_k)^{p-1}\nabla _{\xi}F(\nabla u_k)\nabla u_0\,dx - \la \int_\Om (u_k^+ +\epsilon)^{-\gamma}u_0~dx - \int_\Om (u_k^+)^{q} u_0~dx\right)=0\]
and
\[\lim_{k\to \infty}\left(\mc \int_{\Om}w(x)F(\nabla u_k)^{p-1}\nabla _{\xi}F(\nabla u_k)\nabla u_k\,dx - \la \int_\Om (u_k^+ +\epsilon)^{-\gamma}u_k~dx - \int_\Om (u_k^+)^q u_k~dx\right)=0,\]
which gives
\begin{equation}\label{PS4}
\begin{split}
&\lim\limits_{k\to\infty}\int_{\Omega}w(x)\big\{F(\nabla u_k)^{p-1}\nabla_{\xi}{F}(\nabla u_k)-{F}(\nabla u_0)^{p-1}\nabla_{\xi}{F}(\nabla u_0)\big\}\nabla(u_k-u_0)\,dx\\
&=\lim\limits_{k\to\infty} \left( \la \int_\Om (u_k^+ +\epsilon)^{-\gamma}u_k~dx + \int_\Om (u_k^+)^q u_k~dx - \la \int_\Om (u_k^+ +\epsilon)^{-\gamma}u_0~dx - \int_\Om (u_k^+)^q u_0~dx\right)\\
&\quad -\lim_{k\to \infty}\left(\int_\Om  w(x){F}(\nabla u_0)^{p-1}\nabla_{\xi}{F}(\nabla u_0)\nabla u_k~dx - \int_\Om w(x){F}(\nabla u_0)^p~dx\right).
\end{split}
\end{equation}
Since $u_k \rightharpoonup u_0$ weakly in $W_0^{1,p}(\Omega,w)$ as $k \to \infty$, we get
\begin{equation}\label{PS5}
\lim_{k\to \infty}\left(\int_\Om w(x){F}(\nabla u_0)^{p-1}\nabla_{\xi}{F}(\nabla u_0)\nabla u_k~dx - \int_\Om w(x){F}(\nabla u_0)^p~dx\right)=0.
\end{equation}
On the otherhand, since
\begin{align*}
\left|(u_k^++\epsilon)^{-\gamma}u_0\right| \leq\epsilon^{-\gamma}u_0\text{ and }
\int_\Om \left|\epsilon^{-\gamma}u_0\right|dx \leq \epsilon^{-\gamma}\int_\Om|u_0|~dx< +\infty,
\end{align*}
by the Lebesgue Dominated convergence theorem, it follows that
\begin{equation}\label{PS6}
\lim_{k \to \infty} \int_\Om (u_k^+ +\epsilon)^{-\gamma}u_0~dx = \int_\Om (u_0^+ +\epsilon)^{-\gamma}u_0~dx.
\end{equation}
Since $u_k \to u_0$ pointwise almost everywhere in $\Om$ and for any measurable subset $E$ of $\Om$,
\begin{equation*}
\begin{split}
\int_E |(u_k^++\epsilon)^{-\gamma}u_k |~dx&\leq \int_E\epsilon^{-\gamma}|u_k|~dx\leq\|\epsilon^{-\gamma}\|_{L^\infty(\Om)}\|u_k\|_{L^{l}(\Om)}|E|^{\frac{l-1}{l}}\leq C(\epsilon)|E|^{\frac{l-1}{l}},
\end{split}
\end{equation*}
using Vitali's convergence theorem, we have
\begin{equation}\label{PS7}
\lim\limits_{k\to\infty} \la \int_\Om (u_k^+ +\epsilon)^{-\gamma}u_k~dx = \la \int_\Om (u_0^+ +\epsilon)^{-\gamma}u_0~dx.
\end{equation}
Since $q+1<l$, we have
\[\int_E |(u_k^+)^q u_0|~dx \leq \|u_0\|_{L^{l}(\Om)} \left(\int_E (u_k^+)^{ql^{'}}~dx\right)^{\frac{1}{l{'}}}\leq C_3 |E|^{\alpha}  \]
and
\[\int_E |(u_k^+)^q u_k|~dx \leq \|u_k\|_{L^{l}(\Om)} \left(\int_E (u_k^+)^{ql{'}}~dx\right)^{\frac{1}{l{'}}}\leq C_4 |E|^{\beta}  \]
for some positive constants $C_3,C_4,\alpha$ and $\beta$. Again using Vitali's convergence theorem, we get
\begin{equation}\label{PS8}
\lim_{k \to \infty} \int_\Om (u_k^+)^qu_0~dx  =\int_\Om (u_0^+)^qu_0~dx,
\end{equation}
and
\begin{equation}\label{PS9}
\lim_{k \to \infty} \int_\Om (u_k^+)^qu_k~dx  =\int_\Om (u_0^+)^qu_0~dx.
\end{equation}
Using \eqref{PS5}, \eqref{PS6}, \eqref{PS7}, \eqref{PS8} and \eqref{PS9} in \eqref{PS4}, we obtain
\[\lim\limits_{k\to\infty}\int_{\Omega}w(x)\big({F}(\nabla u_k)^{p-1}\nabla _{\xi}{F}(\nabla u_k)-{F}(\nabla u_0)^{p-1}\nabla _{\xi}{F}(\nabla u_0)\big)\nabla(u_k-u_0)\,dx =0.\]
Since $2\leq p<\infty$, using Lemma \ref{alg}, we obtain $u_k\to u_0$ strongly in $W_0^{1,p}(\Omega,w)$ as $k\to\infty$ which proves our claim.
\end{proof}

\begin{Remark}\label{multrmk}
Let $2\leq p<\infty$. Then by Lemma \ref{MP-geo}, Lemma \ref{PS-cond} and the Mountain Pass Lemma, for every $\la\in(0,\Lambda)$, there exists $\zeta_\epsilon \in W_0^{1,p}(\Omega,w)$ such that $I_{\lambda,\epsilon}^\prime(\zeta_\epsilon)=0$ {and}
$$
I_{\lambda,\epsilon}(\zeta_{\epsilon})=\inf_{\gamma\in\Gamma}\max_{t \in [0,1]}I_{\la,\epsilon}(\gamma (t)) \geq \rho >0,
$$
where 
$$
\Gamma =\big\{\gamma \in C([0,1],W_0^{1,p}(\Om,w)):\gamma(0)=0, \gamma(1)=Te_1\big\}.
$$
Moreover, as a consequence of Lemma \ref{MP-geo}, since for every $\la\in(0,\Lambda)$ we have $\inf\limits_{\|v\|\leq R} I_{\la,\epsilon}(v)<0$, by the weak lower semicontinuity of $I_{\la,\epsilon}$, there exists a nonzero $\nu_\epsilon\in W_0^{1,p}(\Om,w)$ such that $\|\nu_\epsilon\| \leq R$ and
\begin{equation}\label{limit-pass}
\inf\limits_{\|v\|\leq R} I_{\la,\epsilon}(v) =I_{\la,\epsilon}(\nu_\epsilon)<0<\rho \leq I_{\la,\epsilon}(\zeta_\epsilon).
\end{equation}
Thus, $\zeta_\epsilon$ and $\nu_\epsilon$ are two different non trivial critical points of $I_{\la,\epsilon}$, provided $\la\in(0,\Lambda)$.
\end{Remark}

\begin{Lemma}\label{non-negative}
Let $2\leq p<\infty$, then the critical points $\zeta_\epsilon$ and $\nu_\epsilon$ of $I_{\la,\epsilon}$ are nonnegative in $\Omega.$
\end{Lemma}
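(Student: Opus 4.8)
The plan is to test the Euler--Lagrange equation of each critical point against its own negative part. Let $u$ denote either $\zeta_\epsilon$ or $\nu_\epsilon$. Since $I_{\la,\epsilon}\in C^1\big(W_0^{1,p}(\Omega,w),\mathbb{R}\big)$ and $I_{\la,\epsilon}'(u)=0$, the function $u$ satisfies
\begin{equation*}
\int_{\Omega}w(x)F(\nabla u)^{p-1}\nabla_{\xi}F(\nabla u)\nabla\phi\,dx=\la\int_{\Omega}(u^{+}+\epsilon)^{-\gamma}\phi\,dx+\int_{\Omega}(u^{+})^{q}\phi\,dx
\end{equation*}
for every $\phi\in W_0^{1,p}(\Omega,w)$; the right-hand side is finite for such $\phi$ because $(u^{+}+\epsilon)^{-\gamma}\le\epsilon^{-\gamma}$ and, by Lemma~\ref{emb} together with $q<l-1$, we have $(u^{+})^{q}\in L^{l'}(\Omega)$. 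I would then choose the admissible test function $\phi=-u^{-}=\min\{u,0\}\in W_0^{1,p}(\Omega,w)$, whose gradient equals $\nabla u$ on $\{u<0\}$ and vanishes on $\{u\ge0\}$.

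For the left-hand side, on $\{u<0\}$ one has $\nabla u=-\nabla u^{-}$, and I would use the positive homogeneity of $F$ from hypothesis (H2) together with the identity \eqref{homo} applied with $t=-1$, namely $F(-\xi)^{p-1}\nabla_{\xi}F(-\xi)=-F(\xi)^{p-1}\nabla_{\xi}F(\xi)$, to rewrite the integrand so that, by \eqref{lbd}, $F(\nabla u)^{p-1}\nabla_{\xi}F(\nabla u)\cdot\nabla u=F(\nabla u^{-})^{p}$ a.e.\ on $\{u<0\}$. Hence the left-hand side equals $\int_{\Omega}w(x)F(\nabla u^{-})^{p}\,dx=\|u^{-}\|^{p}\ge0$. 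For the right-hand side, on $\{u<0\}$ we have $u^{+}=0$, so $(u^{+})^{q}=0$ and $(u^{+}+\epsilon)^{-\gamma}=\epsilon^{-\gamma}$, while on $\{u\ge0\}$ the factor $-u^{-}$ vanishes; therefore the right-hand side reduces to $-\la\epsilon^{-\gamma}\int_{\{u<0\}}u^{-}\,dx\le0$. Combining the two computations gives $\|u^{-}\|^{p}\le0$, so $u^{-}\equiv0$, i.e.\ $u\ge0$ a.e.\ in $\Omega$; applying this to both $u=\zeta_\epsilon$ and $u=\nu_\epsilon$ completes the proof.

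\textbf{Main obstacle.} The argument is essentially a one-line testing trick, so no serious difficulty is expected; the only point that requires care is the behaviour of the anisotropic flux $F(\cdot)^{p-1}\nabla_{\xi}F(\cdot)$ under the sign change produced by the negative-part truncation, which is exactly what the homogeneity identity \eqref{homo} with $t=-1$ and the coercivity identity \eqref{lbd} from Lemma~\ref{regrmk} are used for. The same computation goes through verbatim when $F_{p,w}=\Delta_{p,w}$ or $S_{p,w}$, which is what will be needed for Theorem~\ref{mthm2}.
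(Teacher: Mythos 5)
Your proof is correct and follows essentially the same route as the paper: test the critical-point equation with $\min\{u,0\}$, observe that the right-hand side is nonpositive because the reaction term $\la(u^++\epsilon)^{-\gamma}+(u^+)^q$ is nonnegative, and identify the left-hand side with $\|u^-\|^p$ via the homogeneity of $F$ and \eqref{lbd}. Your version merely spells out the sign bookkeeping that the paper leaves implicit.
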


\begin{proof}
Let $u=\zeta_\epsilon$ or $\nu_\epsilon$. Therefore, since the integrand
$
\la(u^+ +\epsilon)^{-\gamma}+(u^+)^q
$
is nonnegative in $\Om$, testing \eqref{apx} with $v=\min\{u,0\}$, we get
$$
\int_{\Om}w(x){F}(\nabla v)^p\,dx=0.
$$
This proves that $v=0$ in $\Om$. Hence $u\geq 0$ in $\Om$, which completes the proof.
\end{proof}

\begin{Remark}\label{nonnegrmk}
If ${F}_{p,w}=\Delta_{p,w}$ or $S_{p,w}$ given by \eqref{ex}, then noting Lemma \ref{AI} and Remark \ref{evpthmrmk}, analogously it follows that Lemma \ref{PS-cond}, Remark \ref{multrmk} and Lemma \ref{non-negative} are valid for any $1<p<\infty$.
\end{Remark}

\begin{Lemma}\label{apriori}
Let $2\leq p<\infty$. Then there exists a constant $\Theta>0$ (independent of $\epsilon$) such that $\|v_\epsilon\| \leq \Theta$, where $v_\epsilon = \zeta_\epsilon$ or $\nu_\epsilon$.
\end{Lemma}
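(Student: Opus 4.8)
The plan is to estimate $\nu_\epsilon$ and $\zeta_\epsilon$ separately, taking care that none of the constants produced along the way depends on $\epsilon$. For the local minimizer $\nu_\epsilon$ the bound is immediate: by Remark \ref{multrmk} we have $\|\nu_\epsilon\|\le R$, and $R=k\big(\frac{q+1}{pC\theta}\big)^{\frac{1}{q+1-p}}$ depends only on $k,p,q,|\Omega|$ and the embedding constant of Lemma \ref{emb}, hence not on $\epsilon$; so it suffices to take $\Theta\ge R$.

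For the mountain pass point $\zeta_\epsilon$ I would first bound the mountain pass level uniformly in $\epsilon$. The key point is that the functional $I_{0,\epsilon}=:I_0$ obtained by setting $\lambda=0$ carries no $\epsilon$, so the numbers $R,\rho,\Lambda,T$ in Lemma \ref{MP-geo} --- and hence the admissible class $\Gamma$ in Remark \ref{multrmk} --- may be fixed once and for all, independently of $\epsilon$. Since $I_{\lambda,\epsilon}(v)\le I_0(v)$ for every $v\in W_0^{1,p}(\Omega,w)$ and every $\lambda>0$, and the segment $t\mapsto tTe_1$ lies in $\Gamma$, the mountain pass characterisation in Remark \ref{multrmk} gives
\[
\rho\le I_{\lambda,\epsilon}(\zeta_\epsilon)\le\max_{t\in[0,1]}I_{\lambda,\epsilon}(tTe_1)\le\max_{t\in[0,1]}I_0(tTe_1)=:c_0,
\]
where $c_0$ is finite and independent of $\epsilon$ (finiteness holds since $t\mapsto I_0(tTe_1)$ is continuous on $[0,1]$; recall $I_0(te_1)\to-\infty$ as $t\to\infty$, already used in Lemma \ref{MP-geo}).

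It remains to convert this energy bound into a norm bound, exactly as in Step $1$ of the proof of Lemma \ref{PS-cond}. Recall that $\zeta_\epsilon\ge 0$ in $\Omega$ by Lemma \ref{non-negative}. Since $I_{\lambda,\epsilon}'(\zeta_\epsilon)=0$, testing with $\zeta_\epsilon$ and repeating the computation \eqref{PS2}--\eqref{PS2-new1} yields
\[
c_0\ge I_{\lambda,\epsilon}(\zeta_\epsilon)=I_{\lambda,\epsilon}(\zeta_\epsilon)-\frac{1}{q+1}I_{\lambda,\epsilon}'(\zeta_\epsilon)\zeta_\epsilon\ge C_1\|\zeta_\epsilon\|^p-C\|\zeta_\epsilon\|^{1-\gamma},
\]
where $C_1=\frac{1}{p}-\frac{1}{q+1}>0$ and $C>0$ are independent of $\epsilon$: indeed $-\lambda\int_\Omega\frac{(\zeta_\epsilon+\epsilon)^{1-\gamma}-\epsilon^{1-\gamma}}{1-\gamma}\,dx\ge-\frac{\lambda}{1-\gamma}\int_\Omega\zeta_\epsilon^{1-\gamma}\,dx\ge-C\|\zeta_\epsilon\|^{1-\gamma}$ by \eqref{knownfact} and Lemma \ref{emb}, while $\frac{\lambda}{q+1}\int_\Omega(\zeta_\epsilon+\epsilon)^{-\gamma}\zeta_\epsilon\,dx\ge0$ since $\zeta_\epsilon\ge0$. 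As $p>1>1-\gamma$, the map $t\mapsto C_1t^p-Ct^{1-\gamma}$ tends to $+\infty$, so the displayed inequality forces $\|\zeta_\epsilon\|\le\Theta'$ with $\Theta'$ depending only on $c_0,C,C_1,p,\gamma$. Taking $\Theta:=\max\{R,\Theta'\}$ finishes the proof; for $\Delta_{p,w}$ and $S_{p,w}$ with $1<p<\infty$ the same argument applies, invoking Lemma \ref{AI} and Remark \ref{nonnegrmk} in place of Lemma \ref{alg}.

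The only genuine obstacle is the $\epsilon$-uniformity: one must ensure the mountain pass level stays bounded as $\epsilon\to0$, which is precisely why the comparison with the $\epsilon$-free functional $I_0$ along a fixed path is used. Everything else is a re-run of the coercivity-type estimate already established in the Palais--Smale argument.
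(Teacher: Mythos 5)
Your proposal is correct and follows essentially the same route as the paper: the bound $\|\nu_\epsilon\|\le R$ is immediate, and for $\zeta_\epsilon$ the paper also bounds the mountain pass level by $A=\max_{t\in[0,1]}I_{0,\epsilon}(tTe_1)$ (which is $\epsilon$-free) and then adds the inequality $I_{\la,\epsilon}(\zeta_\epsilon)\le A$ to the critical-point equation tested with $-\zeta_\epsilon/(q+1)$, which is exactly your computation $I_{\la,\epsilon}(\zeta_\epsilon)-\frac{1}{q+1}I_{\la,\epsilon}'(\zeta_\epsilon)\zeta_\epsilon\le A$, yielding $\left(\frac{1}{p}-\frac{1}{q+1}\right)\|\zeta_\epsilon\|^{p}\le C\|\zeta_\epsilon\|^{1-\gamma}+A$ and hence the uniform bound.
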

\begin{proof}
We notice that the result trivially holds if $v_\epsilon = \nu_\epsilon$. Thus, it is enough to deal with the case when $v_\epsilon= \zeta_\epsilon$. Recalling the terms from Lemma \ref{MP-geo} and Remark \ref{multrmk}, we define $A = \max\limits_{t \in [0,1]}I_{0,\epsilon}(tTe_1)$ then
\[A \geq \max_{t \in [0,1]} I_{\la,\epsilon}(tTe_1) \geq\inf_{\gamma\in\Gamma}\max_{t \in [0,1]}I_{\la,\epsilon}(\gamma (t)) = I_{\la,\epsilon}(\zeta_\epsilon)\geq \rho>0>I_{\la,\epsilon}(\nu_\epsilon).\]
Therefore
\begin{equation}\label{ap1}
\frac{1}{p}\int_{\Om}w(x){F}(\nabla\zeta_\epsilon)^p\,dx-{\la}\int_\Om \frac{(\zeta_\epsilon +\epsilon)^{1-\gamma}-\epsilon^{1-\gamma}}{1-\gamma}~dx -\frac{1}{q+1}\int_\Om \zeta_\epsilon^{q+1}~dx \leq A.
\end{equation}
Choosing $\phi=-\frac{\zeta_\epsilon}{p+1}$ as a test function in \eqref{apx} we obtain
\begin{equation}\label{ap2}
-\frac{1}{q+1}\int_{\Om}w(x){F}(\nabla\zeta_\epsilon)^p\,dx+\frac{\la}{q+1}\int_{\Om}\frac{\zeta_\epsilon}{(\zeta_{\epsilon}+\epsilon)^{\gamma}}\,dx+\frac{1}{q+1}\int_{\Om}\zeta_{\epsilon}^{q+1}\,dx=0.
\end{equation}
Adding \eqref{ap1} and \eqref{ap2} we have
\begin{align*}
\left(\frac{1}{p}-\frac{1}{q+1}\right)\int_{\Om}w(x){F}(\nabla\zeta_\epsilon)^p\,dx
 &\leq {\la}\int_\Om \frac{(\zeta_\epsilon +\epsilon)^{1-\gamma}-\epsilon^{1-\gamma}}{1-\gamma}~dx -\frac{\la}{q+1}\int_{\Om}\frac{\zeta_\epsilon}{(\zeta_\epsilon +\epsilon)^{\gamma}}\,dx+A\\
 & \leq C\int_\Om {\zeta_\epsilon} ^{1-\gamma}+A\leq C\|\zeta_\epsilon\|^{1-\gamma}+A,
\end{align*}
for some positive constant $C$ being independent of $\epsilon$, where we have used H\"older's inequality and Lemma \ref{emb}. Thus, since $q+1>p$, the sequence $\{\zeta_\epsilon\}$ is uniformly bounded in $W_0^{1,p}(\Omega,w)$ with respect to $\epsilon$. This completes the proof.
\end{proof}

\begin{Remark}\label{apriorinew}
If ${F}_{p,w}=\Delta_{p,w}$ or ${S}_{p,w}$ as given by \eqref{ex}, then noting Remark \ref{nonnegrmk}, with the same proof Lemma \ref{apriori} holds for any $1<p<\infty$.
\end{Remark}

\noi \textbf{Proof of Theorem \ref{mthm}:}
Let $2\leq p<\infty$. Then by Lemma \ref{non-negative} and Lemma \ref{apriori}, upto a subsequence, $\zeta_\epsilon \rightharpoonup \zeta_0$ and $\nu_\epsilon \rightharpoonup \nu_0$ weakly in $W_0^{1,p}(\Omega,w)$ as $\epsilon \to 0^+$, for some nonnegative $\zeta_0,\nu_0\in W_0^{1,p}(\Omega,w)$.\\
\textbf{Step $1$.} Let $v_0=\zeta_0$ or $\nu_0$. Here, we prove that $v_0\in W_0^{1,p}(\Omega,w)$ is a weak solution of the problem \eqref{psp}. Indeed, for any $\epsilon\in(0,1)$ and $t\geq 0$, we notice that
$$
{\la}{(t+\epsilon)^{-\gamma}}+t^q\geq {\la}{(t+1)^{-\gamma}}+t^q\geq \text{min}\left\{1,\frac{\la}{2}\right\}:=C>0, \text{ say}.
$$
Therefore, recalling that $v_\epsilon=\zeta_\epsilon$ or $\nu_\epsilon$, we have
$$
-{F}_{p,w} \,v_\epsilon={\la}{(v_\epsilon+\epsilon)^{-\gamma}}+v_\epsilon^q\geq C>0.
$$
Following the lines of the argument in \cite[Lemma 3.1]{BGM} and applying \cite[Corollary 3.59]{Juh}, we get the existence of $\xi\in W_0^{1,p}(\Omega,w)$ satisfying
$$
-{F}_{p,w}\xi=C\text{ in }\Omega,\,\,\xi>0\text{ in }\Omega,
$$
such that for every $\omega\Subset\Om$, there exists a constant $c(\omega)>0$ satisfying $\xi\geq c(\omega)>0$ in $\Om$. 
Then, for every nonnegative $\phi\in W_0^{1,p}(\Omega,w)$, we have
\begin{align*}
&\int_{\Om}w(x){F}(v_\epsilon)^{p-1}\nabla_{\xi}{F}(v_\epsilon)\nabla\phi\,dx=\int_{\Om}\Big({\la}{(v_\epsilon+\epsilon)^{-\gamma}}+v_\epsilon^q\Big)\phi\,dx\geq\int_{\Om}C\phi\,dx\\
&=\int_{\Om}w(x){F}(\nabla\xi)^{p-1}\nabla_{\xi}{F}(\nabla\xi)\nabla\phi\,dx.
\end{align*}
Testing with $\phi=(\xi-v_\epsilon)^+$ in the above estimate, we obtain
$$
\int_{\Om}w(x)\big\{{F}(\nabla\xi)^{p-1}\nabla_{\xi}{F}(\nabla\xi)-{F}(\nabla v_\epsilon)^{p-1}\nabla_{\xi}{F}(\nabla v_\epsilon)\big\}\nabla(\xi-v_\epsilon)^+\,dx\leq 0.
$$
Since $2\leq p<\infty$, by Lemma \ref{alg}, we have $v_\epsilon\geq \xi$ in $\Om$. Hence there exists a constant $c(\omega)>0$ (independent of $\epsilon$) such that
\begin{equation}\label{uniform}
v_\epsilon\geq c(\omega)>0,\text{ for every }\omega\Subset\Om. 
\end{equation}
Recalling the definition of $l$ from \eqref{l} along with Lemma \ref{apriori} and the fact \eqref{uniform}, by \cite[Theorem 2.16]{Mikko}, we have
$$
\int_{\Om}w(x){F}(\nabla v_0)^{p-1}\nabla_{\xi}{F}(\nabla v_0)\nabla \phi\,dx=\la\int_{\Om}{\phi}{v_0^{-\gamma}}(x)\,dx+\int_{\Om}v_0^{l}\phi\,dx,
$$
for every $\phi\in C_c^{1}(\Omega)$. Hence the claim follows.\\
\textbf{Step $2$.} Now we establish that $\zeta_0\neq \nu_0$. Choosing $\phi=v_\epsilon\in W_0^{1,p}(\Om,w)$ as a test function in \eqref{apx}, we get
$$
\int_{\Om}w(x){F}(\nabla v_\epsilon)^{p}\,dx=\la\int_{\Om}{v_\epsilon}{(v_\epsilon +\epsilon)^{-\gamma}}\,dx+\int_{\Om}v_\epsilon^{q+1}\,dx.
$$
Since $q+1<l$, using Lemma \ref{emb} we obtain
\begin{equation}\label{nl}
\lim\limits_{\epsilon\to 0^+}\int_{\Om}(v_\epsilon)^{q+1}\,dx=\int_{\Om}v_0^{q+1}\,dx.
\end{equation}
Moreover, since
$$
0\leq {v_\epsilon}{(v_\epsilon +\epsilon)^{-\gamma}}\leq v_\epsilon^{1-\gamma},
$$
using Vitali's convergence theorem, it follows that
$$
\la\lim\limits_{\epsilon\to 0^+}\int_{\Om}{v_\epsilon}{(v_\epsilon +\epsilon)^{-\gamma}}\,dx=\la\int_{\Om}v_0^{1-\gamma}\,dx.
$$
Therefore for every $\phi\in W_0^{1,p}(\Omega,w)$, we obtain
\begin{equation}\label{1}
\lim_{\epsilon\to 0^+}\int_{\Om}w(x){F}(\nabla v_\epsilon)^{p}\,dx=\la\int_{\Om}v_0^{1-\gamma}\,dx+\int_{\Om}v_0^{q+1}\,dx.
\end{equation}
By Remark \ref{tstrmk}, choosing $\phi=v_0$ as a test function in \eqref{psp} we get
\begin{equation}\label{2}
\int_{\Om}w(x){F}(\nabla v_0)^p\,dx=\la\int_{\Om}v_0^{1-\gamma}\,dx+\int_{\Om}v_0^{q+1}\,dx.
\end{equation}
Hence from \eqref{1} and \eqref{2}, we obtain
\begin{equation}\label{3}
\lim\limits_{\epsilon\to 0^+}\int_{\Om}w(x){F}(\nabla v_\epsilon)^p\,dx=\int_{\Om}w(x){F}(\nabla v_0)^p\,dx.
\end{equation}
Using Vitali's convergence theorem, we have
\begin{equation}\label{4}
\lim\limits_{\epsilon\to 0^+}\int_{\Om}[(v_\epsilon +\epsilon)^{1-\gamma}-\epsilon^{1-\gamma}]\,dx=\int_{\Om}v_0^{1-\gamma}\,dx.
\end{equation}
From \eqref{nl}, \eqref{3} and \eqref{4}, we have
$
\lim\limits_{\epsilon\to 0^+}I_{\la,\epsilon}(v_\epsilon)=I_{\la}(v_0),
$
which along with \eqref{limit-pass} gives $\zeta_0\neq \nu_0$.

\noi \textbf{Proof of Theorem \ref{mthm2}:} Noting Lemma \ref{AI}, Remark \ref{nonnegrmk}, Remark \ref{apriorinew} and Remark \ref{evpthmrmk}, proceeding similarly as in the proof of Theorem \ref{mthm}, the result follows.

\section{Appendix}
\textbf{Weighted anisotropic eigenvalue problem:} In this section, we consider an weighted anisotropic eigenvalue problem, which is crucial to obtain our main results above. Related eigenvalue problem in the absence of weight functions has been considered in \cite{BFK, Lind}. Eigenvalue problems for the weighted $p$-Laplace equation is studied in \cite{Drabek}. We study the following weighted ansiotropic eigenvalue problem,
\begin{equation}\label{evp}
-{F}_{p,w}u=\lambda a(x)|u|^{p-2}u\text{ in }\Omega,\quad u=0\text{ on }\partial\Omega,
\end{equation}
where $1<p<\infty$, $w\in W_{p}^{s}$ for some $s\in I$ and $a$ is a nonnegative measurable function in $\Omega$. Further, we assume that $a\in L^\frac{q}{q-p}(\Omega)$ for some $p<q<p_s^{*}$ where $p_s^{*}=\frac{Np_s}{N-p_s}$ if $1\leq p_s<N$ and $p_s^{*}=\infty$ if $p_s\geq N$. If $q=p$, we assume $a\in L^\infty(\Omega)$. Moreover, let $|x\in\Omega:a(x)>0|>0$. Below, we prove the result in case of $q>p$. In case of $q=p$, the proof is analogous.
\begin{Definition}\label{evdef}
We say that $\lambda\in\mathbb{R}$ is an eigenvalue of the problem \eqref{evp}, if there exists $u\in W_0^{1,p}(\Omega,w)\setminus\{0\}$ such that for every $\phi\in W_0^{1,p}(\Omega,w)$, we have
\begin{equation}\label{evpeqn}
\int_{\Omega}w(x){F}(\nabla u)^{p-1}\nabla_{\xi}{F}(\nabla u)\nabla\phi\,dx=\lambda\int_{\Omega}a(x)|u|^{p-1}u\phi\,dx.
\end{equation}
We define $u$ to be an eigenfunction of \eqref{evp} corresponding to the eigenvalue $\lambda$.
\end{Definition}
In this section our main result reads as follows:
\begin{Lemma}\label{evpthm}
Let $2\leq p<\infty$. Then there exists the least one eigenvalue $\lambda_1>0$ and at least one corresponding eigenfunction $e_1\in W_0^{1,p}(\Omega,w)\cap L^\infty(\Omega)\setminus\{0\}$ which is nonnegative in $\Om$ such that for every $\omega\Subset\Om$, there exists a positive constant $c(\omega)$ such that $e_1\geq c(\omega)>0$ in $\omega$. 
\end{Lemma}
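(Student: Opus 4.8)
The plan is to produce $\lambda_1$ as the minimum of the Rayleigh quotient attached to \eqref{evp}. Put
\[
\lambda_1:=\inf\Big\{\,\|u\|^p \;:\; u\in W_0^{1,p}(\Om,w),\ \textstyle\int_{\Om}a(x)|u|^p\,dx=1\,\Big\},
\]
where $\|u\|^p=\int_\Om w(x)F(\nabla u)^p\,dx$ by \eqref{equinorm}. Since $|\{x\in\Om:a(x)>0\}|>0$, the constraint set $\mc M$ is nonempty: pick $\phi\in C_c^\infty(\Om)$ with $\int_\Om a|\phi|^p>0$ and normalize. The objective $u\mapsto\|u\|^p$ is coercive on $W_0^{1,p}(\Om,w)$, and since $t\mapsto t^p$ is convex and nondecreasing on $[0,\infty)$ while $F$ is a norm, the integrand is convex in $\nabla u$, so the objective is weakly sequentially lower semicontinuous on this reflexive space (this is the analogue of the construction in \cite{Drabek}).

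First I would check that $\mc M$ is weakly closed. If $u_k\rightharpoonup u$ in $W_0^{1,p}(\Om,w)$ with $u_k\in\mc M$, then by Lemma \ref{emb} the inclusion $W_0^{1,p}(\Om,w)\hookrightarrow L^q(\Om)$ is compact for the chosen $q\in(p,p_s^{*})$, hence $u_k\to u$ in $L^q(\Om)$; writing H\"older's inequality with exponents $q/p$ and $(q/p)'=q/(q-p)$ and using $a\in L^{q/(q-p)}(\Om)$ yields $\int_\Om a|u_k|^p\to\int_\Om a|u|^p=1$, so $u\in\mc M$ (the case $q=p$ is identical, using $a\in L^\infty(\Om)$). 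The direct method now gives a minimizer $e_1\in\mc M$. Because $F(-x)=F(x)$ and $\nabla|e_1|=\pm\nabla e_1$ a.e., one has $F(\nabla|e_1|)=F(\nabla e_1)$ a.e., so $|e_1|\in\mc M$ is again a minimizer and we may take $e_1\ge 0$ in $\Om$. The constraint map $u\mapsto\int_\Om a|u|^p$ is $C^1$ on $W_0^{1,p}(\Om,w)$ and its derivative at $e_1$ is nonzero (test with $e_1$), so the Lagrange multiplier rule shows $e_1$ solves \eqref{evpeqn} with multiplier $\lambda_1$; moreover $\lambda_1=\|e_1\|^p>0$, since $\lambda_1=0$ would force $F(\nabla e_1)=0$ a.e., hence $\nabla e_1=0$ a.e. by (H1), hence $e_1\equiv 0$, contradicting $e_1\in\mc M$.

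For the boundedness I would repeat the De Giorgi iteration used for Lemma \ref{nlem}: for $k>0$ test \eqref{evpeqn} with $\phi=(e_1-k)^+\in W_0^{1,p}(\Om,w)$, use \eqref{lbd}, \eqref{ubd}, then H\"older's and Young's inequalities together with $a\in L^{q/(q-p)}(\Om)$, $e_1\in L^q(\Om)$ and the embedding of Lemma \ref{emb}; this produces a bound of the form $|A(h)|\le C(h-k)^{-q}|A(k)|^{m}$ for the superlevel sets $A(k)=\{x\in\Om:e_1(x)\ge k\}$, with some $m>1$, and Lemma \ref{useful for L infinity estimate} then gives $e_1\in L^\infty(\Om)$. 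Finally, for the local positivity: by Lemma \ref{regrmk} and Remark \ref{hypormk2} the operator $F_{p,w}$ is structurally comparable to the weighted $p$-Laplacian $\Delta_{p,w}$, and $e_1$ is a nonnegative weak supersolution of $-F_{p,w}u=0$ (in fact of $-F_{p,w}u=\lambda_1 a|e_1|^{p-1}e_1\ge 0$) which is not identically zero; applying the weak Harnack inequality for $p$-admissible weights from \cite{Juh} along a chain of balls covering a given $\omega\Subset\Om$ yields $e_1\ge c(\omega)>0$ in $\omega$. The main obstacle is this last pair of steps: one must carry the anisotropic structure constants of Lemma \ref{regrmk} through the weighted Caccioppoli estimates so that the iteration lemma and the Harnack inequality of \cite{Juh} genuinely apply; the attainment of the minimum itself is routine once the compact embedding of Lemma \ref{emb} is in hand.
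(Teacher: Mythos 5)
Your construction of $\lambda_1$ by minimizing the Rayleigh quotient, the passage to $|e_1|$ for nonnegativity, the Lagrange-multiplier identification of the Euler--Lagrange equation, and the appeal to the weak Harnack/minimum principle for $p$-admissible weights from \cite{Juh} (after checking the structure conditions via Lemma \ref{regrmk}) all match the paper's argument, which delegates the existence step to \cite[Lemma 3.1]{Drabek} and the positivity to \cite[Theorem 3.59]{Juh}; your version of the existence step is in fact more self-contained than the paper's.

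The gap is in the $L^\infty$ bound. You propose to repeat the De Giorgi--Stampacchia iteration of Lemma \ref{nlem}, testing with $(e_1-k)^+$ and invoking Lemma \ref{useful for L infinity estimate}. That works in Lemma \ref{nlem} because there the right-hand side $v_0^{-\gamma}$ is bounded by $1$ on the superlevel set $\{v_0\ge k\}$ for $k>1$, which is what produces the factor $|A(k)}|^{1/q'}$ and hence an exponent $m=\frac{q-1}{p-1}>1$. Here the right-hand side is $\lambda_1 a(x)e_1^{p-1}$ with only $a\in L^{q/(q-p)}$ and $e_1\in L^{q}$ known a priori, so $ae_1^{p-1}\in L^{q'}$ and a single application of H\"older on $\int_{A(k)}a\,e_1^{p-1}(e_1-k)\,dx$ yields no positive power of $|A(k)|$ (splitting $e_1^{p-1}\lesssim k^{p-1}+((e_1-k)^+)^{p-1}$ one gets at best $|A(h)|\le C\big(\tfrac{k}{h-k}\big)^{q}|A(k)|$, i.e.\ $m=1$, and Lemma \ref{useful for L infinity estimate} requires $m>1$). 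This is precisely why the paper switches to a Moser-type iteration: it tests \eqref{evpeqn} with $\phi=u_k^{mp+1}$, $u_k=\min\{e_1,k\}$, uses \eqref{lbd} to control $\int w\,u_k^{mp}|\nabla u_k|^p$, and bootstraps the integrability exponent along the embedding of Lemma \ref{emb} following \cite[Lemma 3.2]{Drabek}. To repair your sketch you would either have to carry out such a bootstrap (first upgrading $e_1\in L^{q}$ to $L^{\kappa q}$, etc.) or run a genuinely more careful level-set iteration; as written, the claimed inequality $|A(h)|\le C(h-k)^{-q}|A(k)|^{m}$ with $m>1$ does not follow.
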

\begin{proof}
\textbf{Existence and nonnegativity:} We claim that
$$
\lambda_1:=\inf\left\{\int_{\Omega}w(x){F}(\nabla u)^p\,dx:\int_{\Omega}a(x)|u|^p\,dx=1\right\}
$$
is the least eigenvalue of the problem \eqref{evp}. It is clear that $\lambda_1\geq 0$. Let $\{u_n\}_{n\in\mathbb{N}}\subset W_0^{1,p}(\Om,w)$ be a minimizing sequence for $\lambda_1$, which gives
\begin{equation}\label{evpmin}
\int_{\Omega}a(x)|u_n|^p\,dx=1\text{ and }\int_{\Omega}w(x){F}(\nabla u_n)^p\,dx=\lambda_1+\delta_n,
\end{equation}
where $\delta_n\to 0$ as $n\to\infty$. Therefore, $\{u_n\}_{n\in\mathbb{N}}$ is uniformly bounded in $W_0^{1,p}(\Omega,w)$ and so by the reflexivity of $W_0^{1,p}(\Omega,w)$ and Lemma \ref{emb}, upto a subsequence $u_n\rightharpoonup e_1$ weakly in $W_0^{1,p}(\Omega,w)$ and strongly in $L^q(\Omega)$. Now proceeding along the lines of the proof of \cite[Lemma $3.1$]{Drabek}, we obtain the existence of a nonnegative eigenfunction $e_1\in W_0^{1,p}(\Om,w)\setminus\{0\}$ such that $\int_{\Omega}a(x)|e_1|^p\,dx=1$ and 
\begin{equation}\label{l1c}
\lambda_1=\int_{\Omega}w(x){F}(\nabla e_1)^p\,dx
\end{equation}
is the eigenvalue of the problem \eqref{evp}.\\
\textbf{Uniform positivity:} Since $e_1\neq 0$, we have $\la_1>0$ and therefore noting Lemma \ref{regrmk} and Lemma \ref{alg}, applying \cite[Thoerem $3.59$]{Juh}, for every $\omega\Subset\Om$, there exists a positive constant $c(\omega)$ such that $e_1\geq c(\omega)>0$ in $\omega$. This also gives $e_1>0$ in $\Omega$.\\
\textbf{Boundedness:} Let $k>0$ and define by $u_k(x)=\min\{e_1(x),k\}$. For $m\geq 0$, choosing $\phi=u_k^{mp+1}$ as a test function in \eqref{evpeqn} we obtain
\begin{equation}\label{evptst}
\int_{\Omega}w(x){F}(\nabla e_1)^{p-1}\nabla_{\xi}{F}(\nabla e_1)\nabla u_k^{mp+1}\,dx=\lambda\int_{\Omega}a(x)e_{1}^{p-1}u_k^{mp+1}\,dx.
\end{equation}
By Lemma \ref{regrmk}, for some positive constant $c_1$, we observe that 
$$
\int_{\Omega}w(x){F}(\nabla e_1)^{p-1}\nabla_{\xi}{F}(\nabla e_1)\nabla u_k^{mp+1}\,dx\geq c_1(mp+1)\int_{\Omega}w(x)u_k^{mp}|\nabla u_k|^p\,dx.
$$
Now using the embedding result in Lemma \ref{emb} and proceeding with the same proof of \cite[Lemma $3.2$]{Drabek}, we obtain $e_1\in L^\infty(\Omega)$.
\end{proof}

\begin{Remark}\label{evpthmrmk}
If $F_{p,w}=\Delta_{p,w}$ or $S_{p,w}$ given by \eqref{ex}, then noting Lemma \ref{AI} and proceeding with the same arguments, Lemma \ref{evpthm} holds for any $1<p<\infty$.
\end{Remark}

\end{document}